\newcommand{\R}{\mathbb{R}}
\newcommand{\ed}[1]{#1}
\newtheorem{Theorem}{Theorem}[section]
\newtheorem{Lemma}[Theorem]{Lemma}
\newtheorem{Proposition}[Theorem]{Proposition}
\newtheorem{Corollary}[Theorem]{Corollary}
\newtheorem{Definition}[Theorem]{Definition}
\author{Simone Calogero\\Department of Mathematics\\ Chalmers Institute of Technology\\ University of Gothenburg\\ Gothenburg, Sweden\\ calogero@chalmers.se\\[1cm]
Stephen Pankavich \\ Department of Applied Mathematics and Statistics\\ Colorado School of Mines\\ Golden, CO USA\\ pankavic@mines.edu\thanks{S.P. is supported by the US National Science Foundation under awards DMS-1211667 and DMS-1614586.} }
\title{On the spatially homogeneous and isotropic\\ Einstein-Vlasov-Fokker-Planck system\\ with cosmological scalar field}
\date{}
\begin{document}
\maketitle
\abstract{The Einstein-Vlasov-Fokker-Planck system describes the kinetic diffusion dynamics of self-gravitating particles within the Einstein theory of general relativity. We study the Cauchy problem for spatially homogeneous and isotropic solutions and prove the existence of both global solutions and solutions that blow-up in finite time depending on the size of certain functions of the initial data. We also derive information on the large-time behavior of global solutions and toward the singularity for solutions which blow-up in finite time. Our results entail the existence of a phase of decelerated expansion followed by a phase of accelerated expansion, in accordance with the physical expectations in cosmology.}
\section{Introduction}
The purpose of this paper is to study spatially homogeneous and isotropic solutions of the Einstein-Vlasov-Fokker-Planck system. The model describes the kinetic diffusion dynamics of self-gravitating particles within the Einstein theory of general relativity. It is assumed that diffusion takes place in a cosmological scalar field, which can be identified with the dark energy source responsible for the phase of accelerated expansion of the Universe.  

\ed{When relativistic effects are neglected the motion of self-gravitating kinetic particles undergoing diffusion is described by the frictionless Vlasov-Poisson-Fokker-Planck system in the gravitational case, which is given by}
\begin{subequations}\label{VPFP}
\begin{align}
&\partial_tf+p\cdot\nabla_xf-\nabla U\cdot\nabla_pf=\sigma\Delta_pf\\
&\Delta U=4\pi\rho,\quad\rho(t,x)=\int_{\R^3}f(t,x,p)\,dp.
\end{align}
\end{subequations}
Here $f(t,x,p)$ is the phase-space density of a system of unit mass particles, $U(t,x)$ is the Newtonian gravitational potential generated by the particle system, and $\sigma>0$ is the diffusion constant; the remaining physical constants have been set to one.
The mathematical properties of the system~\eqref{VPFP} have been extensively studied in the literature.  In particular, it is known that the Vlasov-Poisson-Fokker-Planck system admits a global unique solution for given initial data~\cite{Bou,Ono}, whose asymptotic behavior for large times has been studied in~\cite{Car}. The proof of both these results makes use of the explicit form of the fundamental solution to the linear Fokker-Planck equation.
Similar results have also been obtained for the related Vlasov-Maxwell-Fokker-Planck system~\cite{RVMFP, VMFP}.

A relativistic generalization of the  Vlasov-Poisson-Fokker-Planck system \ed{in the gravitational case} has been introduced recently in~\cite{C} and the purpose of this paper is to initiate its mathematical study. When dealing with the relativistic model one is faced with many new difficulties, including the hyperbolic and nonlinear character of the Einstein field equations (compared to the linearity of the Poisson equation in~\eqref{VPFP}), the non-uniform ellipticity of the diffusion operator, the time-dependence of the diffusion matrix, and the absence of an explicit formula for the fundamental solution to the \ed{linear} relativistic Fokker-Planck equation. The further notorious complexity of the Einstein equations suggests that one begins by studying solutions with symmetries. In this paper we consider solutions which are spatially homogeneous and isotropic. Under these symmetry assumptions the Einstein equations reduce to a system of nonlinear ordinary differential equations.

In the absence of diffusion our model reduces to the Einstein-Vlasov system with cosmological constant. The regularity and asymptotic behavior of spatially homogeneous solutions to the latter system have been studied in~\cite{HL}. The generalization of these results to spatially inhomogeneous solutions (with surface symmetry) is given in~\cite{Sop}. When the cosmological constant is set to zero one obtains the Einstein-Vlasov system. \ed{Some} results available for this system are summarized in~\cite{And}. \ed{It has been shown recently in~\cite{FJS,LT} that the Cauchy problem for the Einstein-Vlasov system is globally well-posed for small data with no symmetry restrictions.}

In~\cite{ACP} we have studied a similar problem when the Einstein equation is replaced by a nonlinear wave equation for a scalar field. It turns out that for spatially homogeneous and isotropic solutions, the Fokker-Planck equation considered in~\cite{ACP} is the same as the one derived in the present paper and thus the analysis of the matter equation does \ed{not} pose any new difficulty. Contrastingly, the Einstein equations behave quite differently than the field equation considered in~\cite{ACP}

The remainder of the paper is organized as follows. In the next section we derive the Einstein-Vlasov-Fokker-Planck system for spatially homogeneous and isotropic solutions. In Section~\ref{cauchysec} we study the Cauchy problem and prove that, depending on the size of certain functions of the the initial data, regular solutions either exist globally or blow-up in finite time. In Section~\ref{asysec} we derive information on the asymptotic behavior as $t\to\infty$ of global solutions and toward the singularity for solutions which blow-up in finite time. We also show that for an open set of initial data which describe an initially decelerating expanding Universe, global solutions will eventually give rise to a phase of accelerated expansion, in agreement with the expectations in Cosmology~\cite{W}. 
 
\section{Derivation of the model}\label{RW}
We begin with a short description of the general relativistic kinetic theory of diffusion, see~\cite{C} for more details. In the following,  Greek indices run from 0 to 3, while Latin indices run from 1 to 3. Moreover all indices are raised and lowered with the matrix $g_{\mu\nu}$ and its inverse $g^{\mu\nu}:=(g^{-1})_{\mu\nu}$, and the Einstein summation rule is applied (e.g., $g^{\mu\nu}g_{\nu\alpha}=\delta^\mu_{\ \alpha}$), unless otherwise stated. Let $(M,g)$ be a spacetime.  Let $x^\mu$ denote local coordinates on $M$, with $t=x^0$ being timelike, and let $(x^\mu,p^\nu)$ be the induced canonical local coordinates on the tangent bundle of the spacetime. 
The mass-shell for particles with unit mass is the 7-dimensional submanifold of the tangent bundle obtained by imposing $g_{\mu\nu}p^\mu p^\nu=-1$ and $p^0>0$, so that $(p^0,p^1,p^2,p^3)$ can be interpreted as the components of the four-momentum of a unit mass particle. The mass-shell condition is used to express $p^0$ in terms of the spatial components of the four-momentum, namely
\begin{equation}\label{p0}
p^0=-g_{00}^{-1}\big[g_{0j}p^j+\sqrt{(g_{0j}p^j)^2-g_{00}(1+g_{ij}p^ip^j)}\big].
\end{equation}
It follows that $(x^\mu,p^i)$ define a local system of coordinates on the mass-shell. The particle distribution function $f$ is defined on the mass-shell and is therefore a function of the coordinates $(t,x^i,p^j)$. The Fokker-Planck equation for $f$ is the PDE
\begin{equation}\label{FP}
L f:=p^0\partial_t f+p^i\partial_{x^i}f-\Gamma^i_{\ \mu\nu}p^\mu p^\nu\partial_{p^i}f=\sigma\mathcal{D}_p f,
\end{equation}     
where $L$ is the Liouville (or Vlasov) operator, $\Gamma^{\alpha}_{\ \mu\nu}$ denote the Christoffel symbols of the metric $g$, $\sigma$ is the (positive) diffusion constant, and $\mathcal{D}_p$ is the diffusion operator, which is the Laplace-Beltrami operator associated to the Riemannian metric
\begin{equation}\label{h}
h=h_{ij}dp^i dp^j,\quad h_{ij}=g_{ij}+g_{00}\frac{p_ip_j}{(p_0)^2}-\frac{p_i}{p_0}g_{0j}-\frac{p_j}{p_0}g_{0i}.
\end{equation}
We have
\[
\mathcal{D}_pf=\frac{1}{\sqrt{\det h}}\partial_{p^i}\left(\sqrt{\det h}\,(h^{-1})^{ij}\partial_{p^j}f\right),
\]
where $(h^{-1})^{ij}$ is the inverse matrix of $h_{ij}$ (i.e., $(h^{-1})^{ij}h_{jk}=\delta^i_{\ k}$).
From the particle distribution $f$ we can construct the particle current density and the energy-momentum tensor by 
\begin{equation}\label{T}
T^{\mu\nu}=\sqrt{-\det g}\int f\, \frac{p^\mu p^\nu }{-p_0}\,dp,\quad J^{\mu}=\sqrt{-\det g}\int f\,   \frac{p^\mu}{-p_0}\,dp,
\end{equation}
where $dp=dp^1\wedge dp^2\wedge dp^3$ and the integration is over the fibers of the mass-shell.
It can be shown that the tensors $T^{\mu\nu}$ and $J^\mu$ verify
\begin{equation}\label{equationT}
\nabla_\mu T^{\mu\nu}=3\sigma J^\nu,\quad \nabla_\mu J^\mu=0.
\end{equation}
To close the system we require the metric $g$ to solve the Einstein equations with cosmological scalar field $\phi$, which, in units $8\pi G=c=1$, is given by
\begin{equation}\label{einsteineq}
R_{\mu\nu}-\frac{1}{2}g_{\mu\nu}R+\phi g_{\mu\nu}=T_{\mu\nu}.
\end{equation}
By~\eqref{equationT} and the Bianchi identity $\nabla^\mu (R_{\mu\nu}-\frac{1}{2}g_{\mu\nu}R)=0$, we find that the cosmological scalar field satisfies the equation
\begin{equation}\label{eqphi}
\nabla_\mu\phi=3\sigma J_\mu.
\end{equation}
In particular, $\phi$ satisfies the wave equation $\Box\phi=\nabla^\mu\nabla_\mu\phi=3\sigma\nabla^\mu J_\mu=0$ and
\[
J^\mu\nabla_\mu\phi=3\sigma J_\mu J^\mu<0,
\]
where for the inequality we have used the fact that $J^\mu$ is timelike. Hence, the cosmological scalar field is decreasing along the matter flow, which can be interpreted as energy being transferred from the scalar field to the particles by diffusion.

Next, we specialize to spatially homogeneous and isotropic solutions. Under these symmetry assumptions, the spacetime metric can be written as 
\begin{equation}\label{metricRW}
g_{\mu\nu}dx^\mu dx^\nu=-dt^2+\frac{a(t)^2}{\mathcal{K}(r)^{2}}\delta_{ij}dx^idx^j,
\end{equation}  
where 
\[
\mathcal{K}(r)=1+\frac{k}{4}r^2,\quad r=|x|,\quad k\in\{-1,0,1\}.
\]
Here $x=(x^1,x^2,x^3)$ is a system of spatial isotropic coordinates on the hypersurfaces $t=const.$, with $t$ denoting the proper time along the normal geodesics, and $|\cdot|$ denoting the standard Euclidean norm. The hypersurfaces of constant proper time have zero/positive/negative constant curvature according to the values $k=0,+1,-1$ of the curvature parameter $k$. Equation~\eqref{p0} gives
\begin{equation}\label{p0new}
p^0=\Bigg[1+\frac{a(t)^2}{\mathcal{K}(r)^{2}}|p|^2\Bigg]^{1/2},\quad p=(p^1,p^2,p^3)
\end{equation}
and $p_0=-p^0$. The non-zero Christoffel symbols $\Gamma^i_{\ \mu\nu}$ of the metric~\eqref{metricRW} are given by
\[
\Gamma^{i}_{\ i0}=\frac{\dot{a(t)}}{a(t)},\quad \Gamma^i_{\ ij}=-\frac{k x^j}{2\mathcal{K}(r)},\quad \Gamma^j_{\ ii}=\frac{k x^j}{2\mathcal{K}(r)} \quad  (i\neq j)
\]
and by their symmetric symbols on the lower indexes; the index $i$ is not summed in the previous equations.  It follows that the Liouville operator in the left hand side of~\eqref{FP} is given by
\begin{equation}\label{Lf}
Lf=p^0\partial_t f+p^i\partial_{x^i}f-2\frac{\dot{a(t)}}{a(t)}p^0p\cdot\nabla_p f-\frac{k}{\mathcal{K}(r)}(\tfrac{1}{2}|p|^2x-(x\cdot p)p)\cdot\nabla_p f,
\end{equation}
with $w\cdot z$ denoting the standard Euclidean scalar product of the vectors $w,z\in\R^3$. 
The metric~\eqref{h} now takes the form
\[
h_{ij}=\left(\frac{a(t)^2}{\mathcal{K}(r)^2}\delta_{ij}-\frac{p_ip_j}{(p_0)^2}\right).
\]
It follows that 
\begin{align*}
&\det h=\frac{a(t)^6}{\mathcal{K}(r)^6(p_0)^2}\\
&(h^{-1})^{ij}=g^{ij}+p^ip^j=\frac{\mathcal{K}(r)^2}{a(t)^2}\delta^{ij}+p^ip^j.
\end{align*}
Hence, the diffusion term in the right hand side of~\eqref{FP} ultimately takes the form
\[
\mathcal{D}_pf=p_0\partial_{p^i}\left[\left(\frac{g^{ij}+p^ip^j}{p_0}\right)\partial_{p^j}f\right]=p^0\partial_{p^i}\left[\left(\frac{\frac{\mathcal{K}(r)^2}{a(t)^2}\delta^{ij}+p^ip^j}{p^0}\right)\partial_{p^j}f\right].
\]
\begin{Definition}
A particle distribution $f$ is said to be spatially homogeneous and isotropic if there exists a function $\widetilde{F}:\R\times[0,\infty)\to[0,\infty)$ such that
\[
f(t,x,p)=\widetilde{F}(t,|v|)\biggr \vert_\mathlarger{{v=\frac{a(t)^2}{\mathcal{K}(r)^2}p}}.
\]
\end{Definition}
It can be shown that this definition is equivalent to require that $f$ is invariant by the six-dimensional group of isometries of the metric~\eqref{metricRW}, see~\cite{MM}.
It is now convenient to define the function $F:\R\times\R^3\to [0,\infty)$ by 
\begin{equation}\label{FtildeF}
F(t,v)=\widetilde{F}(t,|v|),
\end{equation}
in terms of which the Fokker-Planck equation for spatially homogeneous and isotropic distribution functions takes the final form
\begin{equation}\label{FPone}
\partial_t F=\sigma a(t)\partial_{v^i}(D^{ij}\partial_{v^j}F),
\end{equation}
where the diffusion matrix $D$ is 
\[
D^{ij}=\frac{a(t)^2\delta^{ij}+v^iv^j}{\sqrt{a(t)^2+|v|^2}}.
\]

In terms of the function $F$, and recalling~\eqref{FtildeF}, the energy-momentum tensor and current density~\eqref{T} read
\begin{align}
&T_{00}(t)=\frac{1}{a(t)^4}\int_{\R^3}F(t,v)\sqrt{a(t)^2+|v|^2}\,dv,\label{T00}\\
&T_{11}(t)=T_{22}(t)=T_{33}(t)=\frac{1}{3a(t)^2\mathcal{K}(r)^2}\int_{\R^3}F(t,v)\frac{|v|^2}{\sqrt{a(t)^2+|v|^2}}\\\label{Tii}
&J^0(t)=\frac{1}{a(t)^3}\int_{\R^3}F(t,v)\,dv,\quad J^i=0,
\end{align}
and $T_{0i}=T_{ij}=0$, for $i\neq j$.  Thus, equation~\eqref{eqphi} for the cosmological scalar field $\phi=\phi(t)$ becomes
 \begin{equation}\label{eqphinew}
 \dot{\phi}=-3\frac{\sigma}{a^3}N,
\end{equation}
where
\begin{equation}\label{numdens}
N=\int_{\R^3}F\,dv
\end{equation}
is the total number of particles, which is conserved along solutions of~\eqref{FPone}.

Finally the non-zero components of the Einstein tensor $G_{\mu\nu}=R_{\mu\nu}-\tfrac{1}{2}g_{\mu\nu}R$ are 
\[
G_{00}=\frac{3(k+\dot{a}^2)}{a^2},\quad G_{11}=G_{22}=G_{33}=-\frac{k+\dot{a}^2+2a\ddot{a}}{\mathcal{K}(r)^2}.
\]
It follows that the Einstein equations~\eqref{einsteineq} are
\begin{align*}
\frac{3(k+\dot{a}^2)}{a^2}-\phi=\frac{1}{a^4}\int_{\R^3}F\,\sqrt{a^2+|v|^2}\,dv,\\
-2\frac{\ddot{a}}{a}-\left(\frac{\dot{a}}{a}\right)^2-\frac{k}{a^2}+\phi=\frac{1}{3a^4}\int_{\R^3}F\,\frac{|v|^2}{\sqrt{a^2+|v|^2}}\,dv.
\end{align*}
By introducing the Hubble function 
\[
H(t)=\frac{\dot{a}(t)}{a(t)},
\] 
as well as the energy density $\rho(t)$ and the pressure $\mathcal{P}(t)$ by
\begin{equation}\label{rhopr}
\rho(t)=\frac{1}{a(t)^4}\int_{\R^3}F\,\sqrt{a(t)^2+|v|^2}\,dv,\quad \mathcal{P}(t)=\frac{1}{3a(t)^4}\int_{\R^3}F\,\frac{|v|^2}{\sqrt{a(t)^2+|v|^2}}\,dv,
\end{equation}
we can rewrite the Einstein equations in the form
\begin{subequations}\label{Einsteinequations}
\begin{align}
&H^2=\frac{1}{3}(\rho+\phi)-\frac{k}{a^2},\\
&\dot{H}=-\frac{\rho+\mathcal{P}}{2}+\frac{k}{a(t)^2}.
\end{align}
\end{subequations}
We also have the following auxiliary equations, obtained by combining \eqref{Einsteinequations}:
\begin{equation}
\label{Hdot}
\dot{H} = -\frac{1}{6} ( \rho + 3\mathcal{P} ) - H^2 + \frac{1}{3}\phi.
\end{equation}
\begin{equation}\label{rhodot}
\dot{\rho}=-3H(\rho+\mathcal{P})-\dot\phi.
\end{equation}
Note also the estimates
\begin{equation}\label{estrhop}
\rho\geq\frac{N}{a^3},\qquad \frac{\rho}{3}-\frac{N}{3a^3}\leq\mathcal{P}\leq\frac{\rho}{3},
\end{equation}
which follow straightforwardly by the definitions~\eqref{numdens} and \eqref{rhopr}.
The deceleration parameter $Q$ is defined as
\begin{equation}\label{q}
Q=-\frac{a\ddot a}{\dot a^2}=-1-\frac{\dot H}{H^2}.
\end{equation}
The solution is said to be expanding with acceleration if $Q<0$ and $H>0$ and expanding with deceleration if $Q>0$ and $H>0$. From~\eqref{Hdot}, this occurs at time $t$ if and only if $H(t)>0$ and 
\begin{equation}\label{Q}
q(t)=6H^2(t)Q(t)=\rho(t)+3\mathcal{P}(t)-2\phi(t)
\end{equation} 
is negative, respectively positive. 
\section{Existence of regular solutions}\label{cauchysec}
In this section we study the existence of regular solutions to the initial value problem for the spatially homogeneous and isotropic Einstein-Vlasov-Fokker-Planck system with cosmological scalar field. The system, derived in the previous section, is given by
\begin{subequations}\label{EVFP}
\begin{align}
&\partial_t F=\sigma a(t)\partial_{v^i}\left(\frac{a(t)^2\delta^{ij}+v^iv^j}{\sqrt{a(t)^2+|v|^2}}\partial_{v^j}F\right),\label{FPeq}\\
&\dot a=Ha\\
&\dot{H}=-\frac{\rho+\mathcal{P}}{2}+\frac{k}{a^2}\label{FRIED}\\
 &\dot{\phi}=-\frac{3\sigma N}{a^3} \label{COSMO},\\ 
&H^2=\frac{1}{3}(\rho+\phi)-\frac{k}{a^2},\label{CONST}
\end{align}
\end{subequations}
where $\rho(t),\mathcal{P}(t)$ are given by~\eqref{rhopr} and $k$ is either $0,+1$, or $-1$. Initial data for the system~\eqref{EVFP} consist of a quadruple $(F_0,a_0,H_0,\phi_0)$, where $F_0:\R^3\to [0,\infty)$ such that $F_0(v)=\widetilde{F}_0(|v|)$, for some $\widetilde{F}_0:\R\to[0,\infty)$, and 
\[
a_0>0, \quad H_0>0,\quad \phi_0>0.
\]
We assume that $F_0$ is not identically zero and belongs to the space $X=\mathcal{L}_1\cap H^1$, where, for $\gamma>0$,
\[
\mathcal{L}_\gamma=\{g:\R^3\to\R:g\in L^1\times L^2,\ \text{and}\ v\to |v|^\gamma g\in L^1\}.
\]
Moreover, the initial data are assumed to satisfy~\eqref{CONST} at time $t=0$, namely
\begin{equation}\label{constin}
H_0^2=\frac{1}{3}(\rho_0+\phi_0)-\frac{k}{a_0^2}.
\end{equation}
Given $T>0$, a quadruple $(F(t,v),a(t),H(t),\phi(t))$ will be referred to as a regular solution of the system~\eqref{EVFP} on the interval $[0,T)$ and with initial data $(F_0,a_0,H_0,\phi_0)$  if 
\[
F\in C^0((0,T);X),\quad 0<a\in C^1((0,T)), \quad H\in C^1((0,T)),\quad \phi\in C^1((0,T)),
\]
\[
F\geq 0 \text{ a.e, }\quad (a_0,H_0,\phi_0)=\lim_{t\to 0^+}(a(t),H(t),\phi(t)),\quad \lim_{t\to 0^+}\|F(t,\cdot)-F_0\|_{\mathcal{L}_1}= 0,
\]
and~\eqref{EVFP} is satisfied in the domain $(t,v)\in (0,T)\times\R^3$. Note that for regular solutions the functions $\rho,\mathcal{P}$ are continuous and hence the Einstein equations are satisfied in the pointwise, classical sense. However the Fokker-Planck equation need only be verified in the weak sense. 

Proving existence and uniqueness of local regular solutions is a simple generalization of the argument presented in~\cite{ACP}. The main tool is the following set of {\it a priori} estimates on the solution of the Fokker-Planck equation. 
\begin{Proposition}\label{estF}
Let $T_\mathrm{max}>0$ be the maximal time of existence of a regular solution. The following estimates on $F$ hold for all $t\in [0,T_\mathrm{max})$:
 
\begin{itemize}
\item[(i)] $N = \| F(t)\|_{L^1}=\|F_0\|_{L^1}$ (conservation of the total number of particles) and
$\| F(t)\|_{L^2}\leq \|F_0\|_{L^2}$ (dissipation estimate).

\item[(ii)] Propagation of moments: if $F_0\in\mathcal{L}_\gamma$, then $F(t,\cdot)\in\mathcal{L}_\gamma$ and 
\[
\int_{\R^3}(a(t)^2+|v|^2)^{\ed{\gamma/2}} F(t,v)\,dv\leq e^{C\Big(\alpha t+\int_0^t (H)_+(s)\,ds\Big)}\int_{\R^3}(a_0^2+|v|^2)^{\ed{\gamma/2}} F_0(v)\,dv,
\] 
where $\alpha=\alpha(\sigma)>0$, with $\alpha(0)=0$, and $C>0$ are constants (depending on $\gamma$) and $(y)_+$ denotes the positive part of $y$.

\item[(iii)] Propagation of derivative moments: if $|v|^{\gamma/2}\nabla_v F_0\in L^2$ then
\[
\int_{\R^3}(a(t)^2+|v|^2)^{\ed{\gamma/2}} |\nabla_vF(t,v)|^2\,dv\leq Ce^{C\Big(\alpha t+\int_0^t (H)_+(s)\,ds\Big)}.
\]
\end{itemize}
\end{Proposition}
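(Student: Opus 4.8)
The plan is, for each item, to derive a Gronwall-type differential inequality for the relevant functional of $F(t,\cdot)$ and then integrate in time. All the manipulations below (integrations by parts in $v$, differentiation under the integral sign) are first performed on the smooth, rapidly decaying iterates of the local existence scheme --- a Picard iteration built on the fundamental solution of the linear Fokker--Planck operator, exactly as in~\cite{ACP} --- and then transferred to the regular solution by a standard limiting argument (Fatou's lemma / weak lower semicontinuity of the $L^1$, $L^2$ and weighted norms); alternatively they can be justified directly from the regularity in the definition of regular solution together with the moment bounds proved here. For part (i): integrating~\eqref{FPeq} over $\R^3$ annihilates the right-hand side, which is a $v$-divergence, so $\tfrac{d}{dt}\int_{\R^3}F\,dv=0$; since $F\ge0$ this gives $\|F(t)\|_{L^1}=\int_{\R^3}F\,dv=N=\|F_0\|_{L^1}$. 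Multiplying~\eqref{FPeq} by $F$ and integrating by parts once yields
\[
\tfrac12\tfrac{d}{dt}\|F(t)\|_{L^2}^2=-\sigma a(t)\int_{\R^3}D^{ij}\,\partial_{v^i}F\,\partial_{v^j}F\,dv\le0,
\]
because $\sigma>0$, $a>0$, and $D^{ij}\xi_i\xi_j=\big(a^2|\xi|^2+(v\cdot\xi)^2\big)/\sqrt{a^2+|v|^2}\ge0$; hence $\|F(t)\|_{L^2}\le\|F_0\|_{L^2}$.

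\emph{Part (ii).} Put $w=w(t,v)=(a(t)^2+|v|^2)^{\gamma/2}$ and $m(t)=\int_{\R^3}wF\,dv$. Since $\partial_tw=\gamma Ha^2(a^2+|v|^2)^{\gamma/2-1}$ and $a^2\le a^2+|v|^2$, we have $\partial_tw\le\gamma(H)_+w$ pointwise. For the contribution of $\partial_tF$ we integrate by parts twice; the algebraic identity $D^{ij}v^i=\sqrt{a^2+|v|^2}\,v^j$ gives $D^{ij}\partial_{v^i}w=\gamma(a^2+|v|^2)^{(\gamma-1)/2}v^j$, so that
\[
\int_{\R^3}w\,\partial_{v^i}\big(D^{ij}\partial_{v^j}F\big)\,dv=\int_{\R^3}\partial_{v^j}\!\Big(\gamma(a^2+|v|^2)^{(\gamma-1)/2}v^j\Big)F\,dv,
\]
and the divergence in the integrand is bounded in absolute value by $C(\gamma)(a^2+|v|^2)^{(\gamma-1)/2}\le C(\gamma)\,a^{-1}w$. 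Collecting terms yields $\dot m\le\big(C(\gamma)\sigma+\gamma(H)_+\big)m$, whence by Gronwall's inequality the stated bound holds with $\alpha$ proportional to $\sigma$ (so $\alpha(0)=0$); finiteness of the right-hand side is precisely the assertion $F(t,\cdot)\in\mathcal{L}_\gamma$.

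\emph{Part (iii).} Write $G_k=\partial_{v^k}F$ and $y(t)=\int_{\R^3}w\,|\nabla_vF|^2\,dv$. Differentiating~\eqref{FPeq} in $v^k$ shows $G_k$ solves the same equation with the extra source term $\sigma a\,\partial_{v^i}\big((\partial_{v^k}D^{ij})G_j\big)$, so
\[
\dot y=\int_{\R^3}(\partial_tw)|\nabla_vF|^2\,dv+2\sigma a\int_{\R^3}w\,G_k\Big[\partial_{v^i}\big(D^{ij}\partial_{v^j}G_k\big)+\partial_{v^i}\big((\partial_{v^k}D^{ij})G_j\big)\Big]\,dv .
\]
The first term is $\le\gamma(H)_+y$ as above. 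In the principal diffusion term, one integration by parts produces the nonpositive dissipation $-2\sigma a\int_{\R^3} w\,D^{ij}\partial_{v^i}G_k\,\partial_{v^j}G_k\le0$, which we discard, plus a term in which $\partial_{v^i}$ falls on $w$; using $\sum_kG_k\partial_{v^j}G_k=\tfrac12\partial_{v^j}|\nabla_vF|^2$, one more integration by parts together with $D^{ij}\partial_{v^i}w=\gamma(a^2+|v|^2)^{(\gamma-1)/2}v^j$ bounds it by $C(\gamma)\sigma y$. It remains to treat the source term; integrating by parts and writing $\partial_{v^i}G_k=\partial_{v^i}\partial_{v^k}F$, it splits into (a) a piece in which a derivative falls on $w$ or on $D$ --- harmless since $|\nabla_vw|\lesssim(a^2+|v|^2)^{(\gamma-1)/2}$, $|\nabla_vD|\lesssim1$, and $|\nabla_v^2D|\lesssim(a^2+|v|^2)^{-1/2}$, each contributing $\lesssim\sigma y$ (using $\sigma a\,(a^2+|v|^2)^{-1/2}\le\sigma$) --- and (b) the genuinely second-order piece, which contracts the explicit $\partial_{v^k}D^{ij}$ with the Hessian $\partial_{v^i}\partial_{v^j}F$ and with $G_k$, producing (up to weights $w(a^2+|v|^2)^{-1/2}$ and $w(a^2+|v|^2)^{-3/2}$) a combination of the scalar fields $v\cdot\nabla_v|\nabla_vF|^2$, $\Delta F\,(v\cdot\nabla_vF)$ and $\big(v^{T}(\nabla_v^2F)v\big)(v\cdot\nabla_vF)$.

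The crucial point --- and the step I expect to be the main obstacle --- is piece (b) of part (iii): at first sight it carries genuine second derivatives of $F$ against coefficients that are not integrable relative to the weight $w$, and the diffusion matrix $D$ is too degenerate in the directions transverse to $v$ (its transverse eigenvalue is only $a^2/\sqrt{a^2+|v|^2}$) to absorb them for large $|v|$, so the dissipation term alone is of no help. The resolution is that, by symmetry of the Hessian $\nabla_v^2F$ together with the rotational structure of $D$, each of the three scalar fields above equals an exact $v$-divergence plus a constant multiple of $|\nabla_vF|^2$ (for the last one, plus a term dominated by $|v|^2|\nabla_vF|^2$, which the weight $(a^2+|v|^2)^{-3/2}$ tames). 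One more integration by parts therefore removes all second derivatives of $F$, and the surviving weighted integrals of $|\nabla_vF|^2$ are $\lesssim\sigma y$ by the same bookkeeping as before. Altogether $\dot y\le C(\gamma)\big(\sigma+(H)_+\big)y$, and since $y(0)=\int_{\R^3}(a_0^2+|v|^2)^{\gamma/2}|\nabla_vF_0|^2\,dv<\infty$ by hypothesis (and $F_0\in H^1$), Gronwall's inequality gives the claimed bound.
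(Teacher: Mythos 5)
Your proof is correct and follows essentially the same route as the paper: the paper only formally derives the moment estimate (ii), via the same two integrations by parts (using $v_iD^{ij}=\sqrt{a^2+|v|^2}\,v^j$ so that the divergence is bounded by $C(a^2+|v|^2)^{(\gamma-1)/2}\leq C a^{-1}(a^2+|v|^2)^{\gamma/2}$) followed by Gr\"onwall, and defers (i) and (iii) to the analogous Prop.~2.2 of~\cite{ACP}. The extra details you supply for (i) and for the commutator terms in (iii) --- in particular rewriting the Hessian contractions $v\cdot\nabla_v|\nabla_vF|^2$, $\Delta F\,(v\cdot\nabla_vF)$ and $(v^{T}\nabla_v^2F\,v)(v\cdot\nabla_vF)$ as exact divergences plus terms controlled by the weights, then absorbing them with $\sigma a(a^2+|v|^2)^{-1/2}\leq\sigma$ --- check out and are consistent with the energy-method argument the paper intends.
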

\begin{proof}
The proof is very similar to the proof of Prop. 2.2 in~\cite{ACP}, hence we limit ourself to formally derive the estimate in (ii). We compute
\begin{align*}
\frac{d}{dt}\int_{\R^3}(a(t)^2+|v|^2)^{\ed{\gamma/2}} F\,dv&=\ed{\gamma} a(t)\dot{a}(t)\int_{\R^3}(a(t)^2+|v|^2)^{\ed{\gamma/2}-1}F\,dv\\
&\quad+\sigma a(t)\int_{\R^3}(a(t)^2+|v|^2)^{\ed{\gamma/2}} \partial_{v^i}(D^{ij}\partial_{v^j}F)\,dv\\
&=\ed{\gamma} H(t) a(t)^2\int_{\R^3}(a(t)^2+|v|^2)^{\ed{\gamma/2}-1}F\,dv\\
&\quad-\sigma\ed{\gamma} a(t)\int_{\R^3}(a(t)^2+|v|^2)^{\ed{\gamma/2}-1}v_iD^{ij}\partial_{v^j}F\,dv\\
&\leq \ed{\gamma} (H)_+(t)\int_{\R^3}(a(t)^2+|v|^2)^{\ed{\gamma/2}}F\,dv\\
&\quad+\sigma\ed{\gamma} a(t)\int_{\R^3}\partial_{v^j}[(a(t)^2+|v|^2)^{\ed{\gamma/2}-1}v_iD^{ij}]F\,dv.
\end{align*}
In the last integral we use the brief calculation
\[
\partial_{v^j}[\dots]=3(a(t)^2+|v|^2)^{\ed{\gamma/2}-1/2}+\left(\ed{\gamma}-1 \right)(a(t)^2+|v|^2)^{\ed{\gamma/2}-3/2}|v|^2\leq C(a(t)^2+|v|^2)^{\ed{\gamma/2}-1/2}.
\]
Hence, we have
\[
\frac{d}{dt}\int_{\R^3}(a(t)^2+|v|^2)^{\ed{\gamma/2}} F\,dv\leq C(\alpha+(H)_+(t))\int_{\R^3}(a(t)^2+|v|^2)^{\ed{\gamma/2}}F\,dv
\]
and Gr\"onwall's Lemma concludes the proof of (ii).

\end{proof}

The following lemma provides a characterization of $\rho(t)$ that will be useful in subsequent results.
\begin{Lemma}
\label{charrho}
For any $t\in[0,T_\mathrm{max})$, we have
$$\rho(t) = a(t)^{-3} \left ( \rho_0 a_0^3 + 3\sigma N t - 3\int_{0}^t H(s) a(s)^3 \mathcal{P}(s) \ ds \right ).$$
In particular, if $H(t) \geq 0$ for all $t \in [0,T_\mathrm{max})$, then
$$\rho(t) \leq (\rho_0 a_0^3 + 3\sigma N t) a(t)^{-3}$$
for $t \in [0,T_\mathrm{max})$.
\end{Lemma}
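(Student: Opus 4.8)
The plan is to convert the assertion into a first-order ODE for the weighted energy $a^3\rho$ and then integrate. First I would combine the auxiliary equation~\eqref{rhodot}, namely $\dot\rho=-3H(\rho+\mathcal{P})-\dot\phi$, with the scalar-field equation~\eqref{COSMO}, $\dot\phi=-3\sigma N a^{-3}$, to get $\dot\rho=-3H\rho-3H\mathcal{P}+3\sigma N a^{-3}$. Multiplying through by $a^3$ and using $\dot a=Ha$ from~\eqref{EVFP}, the contribution $3a^2\dot a\,\rho=3a^3H\rho$ cancels the term $-3a^3H\rho$ coming from $a^3\dot\rho$, leaving the clean identity
\[
\frac{d}{dt}\big(a(t)^3\rho(t)\big)=3\sigma N-3H(t)a(t)^3\mathcal{P}(t).
\]

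Next I would integrate this identity from $0$ to $t$. Since we are dealing with a regular solution, $a$ and $H$ lie in $C^1((0,T_\mathrm{max}))$, $\rho$ and $\mathcal{P}$ are continuous on the same interval, and all of them extend continuously to $t=0$ with $a(0)=a_0$ and $\rho(0)=\rho_0$; hence the right-hand side above is continuous on $[0,t]$ and the fundamental theorem of calculus applies, giving $a(t)^3\rho(t)-a_0^3\rho_0=3\sigma N t-3\int_0^t H(s)a(s)^3\mathcal{P}(s)\,ds$. Dividing by $a(t)^3>0$ produces the stated formula for $\rho(t)$.

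For the second assertion I would simply observe that $\mathcal{P}(s)\geq 0$ — immediate from the definition~\eqref{rhopr}, or alternatively from~\eqref{estrhop} together with $\rho\geq Na^{-3}$ — and that $a(s)^3>0$. Thus, if $H(s)\geq 0$ for all $s\in[0,T_\mathrm{max})$, the integrand $H(s)a(s)^3\mathcal{P}(s)$ is nonnegative, the integral term may be discarded, and one is left with $a(t)^3\rho(t)\leq a_0^3\rho_0+3\sigma N t$, which is exactly the claimed bound after dividing by $a(t)^3$.

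I do not anticipate any genuine obstacle: the derivation is a one-line manipulation of identities already established in Section~\ref{RW}, and the only point deserving a word of care is the application of the fundamental theorem of calculus up to the endpoint $t=0$, which is legitimate because continuity on $(0,T_\mathrm{max})$ and the prescribed initial limits are part of the definition of a regular solution. (If one prefers to avoid the endpoint issue entirely, integrate on $[\varepsilon,t]$ and pass to the limit $\varepsilon\to0^+$ using $\lim_{t\to0^+}a(t)^3\rho(t)=a_0^3\rho_0$.)
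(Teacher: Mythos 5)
Your proof is correct and follows essentially the same route as the paper: multiplying the balance law \eqref{rhodot} by $a^3$ (equivalently, using the integrating factor $(a/a_0)^3$), substituting $\dot\phi$ from \eqref{COSMO}, integrating, and discarding the nonnegative term $3\int_0^t H a^3 \mathcal{P}\,ds$ when $H\geq 0$ since $\mathcal{P}\geq 0$. Your extra care about the endpoint $t=0$ is a harmless refinement of the same argument.
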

\begin{proof}
Using \eqref{rhodot} we find
$$\dot{\rho} + 3H\rho = -3H\mathcal{P} - \dot{\phi},$$
and thus using the integrating factor $\exp (\int_0^t 3H(s) \ ds) = \left ( \frac{a(t)}{a_0} \right )^3$, we find
$$\frac{d}{dt} \left ( \rho(t) \left ( \frac{a(t)}{a_0} \right )^3 \right ) = (-3H(t)\mathcal{P}(t) - \dot{\phi}(t) ) \left ( \frac{a(t)}{a_0} \right )^3.$$
Integrating over $[0, t]$ and using \eqref{COSMO}, this becomes
$$\rho(t) a(t)^3 - \rho_0a_0^3 = 3\int_{0}^t \left ( -H(s)a(s)^3\mathcal{P}(s) + \sigma N \right ) \ ds$$
and the conclusion follows.
\end{proof}

In order to determine whether a regular solution exists globally or blows-up in finite time, we shall often apply the following simple result within subsequent sections.
\begin{Lemma}
\label{Lemmablowup}
\ed{Let $t _1\in \mathbb{R}$ be given and assume there is $t_2 > t_1$ with $I\in C^1(t_1,t_2)$ satisfying $I_1 := I(t_1) < 0$ and
$$\dot{I}(t) \leq -I(t)^2$$
for all $t \in (t_1,t_2)$.
\begin{enumerate}[(a)]
\item If $t_2 \geq t_1 - \frac{1}{I_1}$, then $t_2 = t_1 - \frac{1}{I_1}$ and 
$\lim_{t \to t_2^-} I(t) = -\infty$ with the estimate
$$I(t) \leq - \frac{1}{t_2 - t}$$
for all $t \in [t_1, t_2).$
\item If $t_2 \leq t_1 - \frac{1}{I_1}$ and $\lim_{t \to t_2^-} I(t) = -\infty$, then
$$I(t) \geq -\frac{1}{t_2-t}$$
for $t \in [t_1, t_2)$.
\end{enumerate}}
\end{Lemma}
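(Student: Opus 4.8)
\emph{Proof proposal.} The plan is to linearize the differential inequality via the substitution $J := 1/I$. First I would note that, since $I(t_1) = I_1 < 0$ and $\dot I \le -I^2 \le 0$ on $(t_1,t_2)$, the function $I$ is non-increasing, so $I(t) \le I_1 < 0$ for all $t \in [t_1,t_2)$; in particular $I$ never vanishes, hence $J = 1/I$ is a well-defined, strictly negative, $C^1$ function on $[t_1,t_2)$, with $J(t_1) = 1/I_1 < 0$. Differentiating and using the hypothesis gives the crucial estimate $\dot J = -\dot I/I^2 \ge I^2/I^2 = 1$ on $(t_1,t_2)$, so that, by the fundamental theorem of calculus, $J(t) \ge J(t_1) + (t-t_1) = 1/I_1 + (t-t_1)$ for every $t \in [t_1,t_2)$. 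Writing $t^\ast := t_1 - 1/I_1$ (note $t^\ast > t_1$ since $I_1 < 0$), this reads $J(t) \ge t - t^\ast$.

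For part (a), assuming $t_2 \ge t^\ast$: if we had $t_2 > t^\ast$, then $t^\ast \in [t_1,t_2)$ and the bound above would give $J(t^\ast) \ge 0$, contradicting $J < 0$; hence $t_2 = t^\ast$. Then for $t \in [t_1,t_2)$ we have $t - t_2 \le J(t) = 1/I(t) < 0$, and since all three quantities are negative, passing to reciprocals reverses the inequality and yields $I(t) \le -1/(t_2-t)$; letting $t \to t_2^-$ on the right forces $\lim_{t\to t_2^-} I(t) = -\infty$. For part (b), assuming $t_2 \le t^\ast$ together with $\lim_{t\to t_2^-} I(t) = -\infty$: then $J(s) = 1/I(s) \to 0^-$ as $s \to t_2^-$, and for fixed $t \in [t_1,t_2)$ and any $s \in (t,t_2)$, integrating $\dot J \ge 1$ over $[t,s]$ gives $J(t) \le J(s) - (s-t)$; sending $s \to t_2^-$ yields $J(t) \le -(t_2-t) < 0$, and taking reciprocals of this inequality between negative numbers gives $I(t) \ge -1/(t_2-t)$, as desired.

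The argument is elementary once the substitution $J = 1/I$ is in hand, so there is no real obstacle; the only points needing a little care are justifying that $I$ remains strictly negative on $[t_1,t_2)$ (so that $J$ is well-defined and $C^1$, including at the left endpoint, which is what lets us integrate $\dot J \ge 1$ from $t_1$), correctly tracking the direction of inequalities when inverting negative quantities, and, in part (b), using the assumed blow-up of $I$ at $t_2$ to identify $\lim_{s\to t_2^-} J(s) = 0$.
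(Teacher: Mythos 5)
Your proof is correct and follows essentially the same route as the paper: both rest on the substitution $J=1/I$ (equivalently dividing by $-I^2$), the resulting inequality $\dot J\geq 1$, integration, and careful inversion of negative quantities. The only cosmetic difference is in part (a), where you identify $t_2=t_1-1/I_1$ directly by the contradiction $J(t^\ast)\geq 0$, while the paper first derives the blow-up at $t_1-1/I_1$ and then concludes $t_2$ must coincide with it; this is a minor rearrangement, arguably slightly cleaner, but not a different argument.
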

\begin{proof}
\ed{Since $\dot{I}(t) \leq 0$ on $(t_1,t_2)$ and $I_1 < 0$, it follows that $I(t) < 0$ on the same interval and we divide by $-I(t)^2$ so that the differential inequality becomes
\begin{equation}
\label{DiffIneq}
\frac{d}{dt} \left ( I(t)^{-1}\right) \geq 1,
\end{equation}
for all $t \in (t_1,t_2)$.
Integrating over $[t_1, \tau]$ for $\tau \in (t_1, t_2)$  yields
$$ \frac{1}{I(\tau)} \geq \tau - \left (t_1 - \frac{1}{I_1} \right)$$ 
and for $\tau \in \left [t_1, t_1 -  \frac{1}{I_1}\right )$, this can be inverted to find
$$I(\tau) \leq \frac{1}{\tau - \left (t_1 - \frac{1}{I_1} \right)}.$$
Taking the limit as $\tau \to \left (t_1 -  \frac{1}{I_1} \right )^-$ implies $I(\tau) \to -\infty$ at this time.
Finally, since the differential inequality cannot be satisfied after the blow-up time $t_1 - \frac{1}{I_1}$, it follows that $t_2 = t_1 - \frac{1}{I_1}$. The resulting estimate then follows by making this replacement in the above inequality.}

\ed{In the second case, we first note that the limit condition implies $\lim_{t \to t_2^-} I(t)^{-1} = 0$.
Integrating \eqref{DiffIneq} over $[\tau, t_2)$ for $\tau \in (t_1, t_2)$, we find
$$-\frac{1}{I(\tau)} \geq t_2 - \tau$$
and thus the lower bound
$$I(\tau) \geq -\frac{1}{t_2- \tau}$$
for $\tau \in [t_1, t_2)$.}

\end{proof}


\subsection{Characterization of the maximal time of existence}
Next, we focus on proving a series of criteria for the existence of global regular solutions, or their finite-time blowup.
We begin by showing that as long as $a(t)$ remains bounded away from zero, the solution remains regular. 
 \begin{Lemma}\label{nobigcrunch}
 \begin{equation}\label{condglobal}
 \alpha:=\inf_{t\in[0,T_\mathrm{max})}a(t)>0\Rightarrow T_\mathrm{max}=+\infty.
 \end{equation}
 \end{Lemma}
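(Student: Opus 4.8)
The plan is to argue by contradiction: suppose $\alpha > 0$ but $T_\mathrm{max} < \infty$. Since local existence can be continued as long as the relevant norms of the solution stay bounded on $[0,T_\mathrm{max})$, it suffices to establish uniform bounds on $a(t)$, $H(t)$, $\phi(t)$, and on the $X$-norm of $F(t,\cdot)$ on this finite interval. The hypothesis already gives the lower bound $a(t) \geq \alpha$; the first step is to get an upper bound on $a$. From \eqref{COSMO} we have $\dot\phi = -3\sigma N/a^3 \leq 0$, so $\phi(t) \leq \phi_0$; moreover $\phi$ cannot decrease faster than $\dot\phi \geq -3\sigma N/\alpha^3$, so $\phi$ stays bounded (in fact $\phi(t) \geq \phi_0 - 3\sigma N T_\mathrm{max}/\alpha^3$) on $[0,T_\mathrm{max})$. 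Next I would bound $\rho$. Using Lemma~\ref{charrho} together with the estimate $0 \leq \mathcal{P} \leq \rho/3$ from \eqref{estrhop}, one obtains a Gr\"onwall-type inequality for $\rho(t)a(t)^3$; combined with the lower bound on $a$ this controls $\rho$ in terms of $\int_0^t H_+(s)\,ds$. The constraint equation \eqref{CONST}, $H^2 = \tfrac13(\rho+\phi) - k/a^2 \leq \tfrac13(\rho+\phi) + 1/\alpha^2$, then bounds $|H|$ in terms of $\rho$ and $\phi$, and conversely $\dot a = Ha$ integrates to control $a$. These are coupled, so the clean way is to introduce $E(t) = \rho(t) + \phi(t)$ (or $\rho(t)a(t)^3$) and derive a single differential inequality of the form $\dot E \leq C(1 + E)$ on $[0,T_\mathrm{max})$ using \eqref{rhodot}, \eqref{COSMO}, the pressure bounds, and $|H| \leq C\sqrt{1+E}$; Gr\"onwall on a finite interval then yields a uniform bound on $E$, hence on $\rho$, $\phi$, and $|H|$, and then $a(t) = a_0\exp(\int_0^t H)$ is bounded above.

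Once $a$, $H$, $\phi$, $\rho$ are all bounded on $[0,T_\mathrm{max})$ — in particular $\int_0^{T_\mathrm{max}} H_+(s)\,ds < \infty$ — Proposition~\ref{estF} does the rest: part (i) gives $\|F(t)\|_{L^1} = N$ and $\|F(t)\|_{L^2} \leq \|F_0\|_{L^2}$ uniformly, part (ii) with $\gamma = 1$ bounds the first moment $\int (a^2+|v|^2)^{1/2} F\,dv$ (since the exponential factor $e^{C(\alpha t + \int_0^t H_+)}$ is bounded on the finite interval), and part (iii) bounds $\int (a^2+|v|^2)^{1/2}|\nabla_v F|^2\,dv$, hence $\|\nabla_v F(t)\|_{L^2}$ since $a \geq \alpha$. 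Together these show $\sup_{t \in [0,T_\mathrm{max})}\|F(t,\cdot)\|_X < \infty$. With all the solution components bounded and their governing equations having locally Lipschitz right-hand sides in the appropriate sense, the local existence theorem can be applied at a time near $T_\mathrm{max}$ to extend the solution past $T_\mathrm{max}$, contradicting maximality. Hence $T_\mathrm{max} = +\infty$.

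The main obstacle is the circularity between the growth of $\rho$ and the growth of $H$: the bound on $\rho$ from Lemma~\ref{charrho} involves $\int_0^t H a^3 \mathcal{P}$, while $H$ is controlled by $\rho$ through the constraint \eqref{CONST}, and $a$ by $H$ through $\dot a = Ha$. One must package these into a single scalar quantity satisfying a closed differential inequality with at most linear growth, exploiting that $\mathcal{P} \leq \rho/3$ keeps the source terms linear in $\rho$ and that $k/a^2 \leq 1/\alpha^2$ is harmless. A minor secondary point is that on a finite interval the sign of $H$ is not known a priori, so one should work with $H^2$ via the constraint rather than assuming $H \geq 0$; but $|H| \leq C\sqrt{1+\rho+\phi}$ from \eqref{CONST} is all that is needed, and this is exactly the estimate that feeds both the moment bounds in Proposition~\ref{estF} and the bound on $a$.
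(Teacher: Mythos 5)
Your overall strategy (argue by continuation, bound $a,H,\phi,\rho,\mathcal{P}$ and the $X$-norm of $F$ on the finite interval, then use Proposition~\ref{estF} with $\gamma=1$ and $a\geq\alpha$) is the same as the paper's, but the step you single out as the crux does not close as you state it. From \eqref{rhodot} (or Lemma~\ref{charrho}), the pressure bound $\mathcal{P}\leq\rho/3$, and $|H|\leq C\sqrt{1+E}$ from \eqref{CONST}, the quantity $E=\rho+\phi$ satisfies $\dot E=-3H(\rho+\mathcal{P})\leq C(1+E)^{3/2}$, not $\dot E\leq C(1+E)$: the transport term is superlinear once $H$ is estimated through the constraint. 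A differential inequality with exponent $3/2$ is precisely compatible with finite-time blow-up, so Gr\"onwall on $[0,T_\mathrm{max})$ gives no uniform bound and the circularity you identified is not resolved by your proposed scalar inequality (the same problem occurs with $\rho a^3$). There is also a sign slip: in Lemma~\ref{charrho} the term $-3\int_0^t Ha^3\mathcal{P}\,ds$ is the dangerous (positive) one exactly when $H<0$, so what must be controlled is $\int (H)_-$, not $\int (H)_+$.

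The paper's proof breaks the loop before it starts: since $\rho+\mathcal{P}\geq 0$, equation \eqref{FRIED} alone gives $\dot H\leq k/a^2\leq \alpha^{-2}$, hence $H(t)\leq H_0+Ct$ with no information on $\rho$ required. This bounds $\int_0^t(H)_+$ on the finite interval, so Proposition~\ref{estF}(ii) with $\gamma=1$ bounds the first moment $\int\sqrt{a^2+|v|^2}\,F\,dv$, and then $a\geq\alpha$ in \eqref{rhopr} bounds $\rho$ and $\mathcal{P}$ directly; \eqref{CONST} and \eqref{FRIED} then give $H\in W^{1,\infty}$, and $a,\phi$ follow. If you insist on a fluid-side closure instead of the kinetic moment estimate, it can be repaired: either use $u=\rho a^4$, for which $\dot u=Ha^4(\rho-3\mathcal{P})+3\sigma Na$ and $0\leq\rho-3\mathcal{P}\leq N/a^3$ (from \eqref{estrhop}) make the $H<0$ contribution nonpositive, or observe that $\int_0^t(H)_-\,ds\leq\int_0^t(H)_+\,ds+\log(a_0/\alpha)$ because $a$ is pinched between $\alpha$ and the upper bound coming from $H\leq H_0+Ct$, and then apply Gr\"onwall to $\rho a^3$. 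As written, however, the claimed linear inequality is a genuine gap.
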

 \begin{proof}
We limit ourselves to showing that, under the assumption within~\eqref{condglobal}, the solution cannot blow-up in finite time. Assume $T_\mathrm{max}<\infty$. Using $a(t)\geq\alpha>0$ in~\eqref{COSMO} and~\eqref{FRIED} gives $\phi\in W^{1,\infty}([0,T_\mathrm{max}))$ and \ed{$H(t)\leq H_0+Ct$. From the latter inequality and Proposition~\ref{estF} (with $\gamma=1$) we conclude that the integral 
\[
\int_{\R^3}\sqrt{a(t)^2+|v|^2}F(t,v)\,dv
\]
is bounded. In particular $F\in L^\infty([0,T_\mathrm{max}),X)$. Moreover using $a(t)\geq\alpha>0$ in~\eqref{rhopr} we find that
 $\rho,\mathcal{P}\in L^\infty([0,T_\mathrm{max}))$. By~\eqref{CONST} and~\eqref{FRIED} we obtain $H\in W^{1,\infty}([0,T_\mathrm{max}))$. As $H=\dot{a}/a$ we also have $a\in W^{1,\infty}([0,T_\mathrm{max}))$ and the proof is complete.  }
 \end{proof}

\begin{Theorem}\label{condblowup}
Let $T_\mathrm{max}>0$ be the maximal  time of existence of a regular solution. Then, for $k=0,-1$ the following are equivalent:
\begin{itemize}
\item[(a)] $\displaystyle \lim_{t\to T_\mathrm{max}^-}\phi(t)\geq0$
\item[(b)] $H(t)>0$, for all $t\in [0,T_\mathrm{max})$
\item[(c)] $T_\mathrm{max}=+\infty.$
\end{itemize}
For $k=1$, we can only prove the weaker statements
\begin{equation}\label{criteria2}
\left .
\begin{gathered}
T_\mathrm{max}=+\infty\Rightarrow\lim_{t\to T_\mathrm{max}^-}\phi(t)\geq0,\\ \lim_{t\to T_\mathrm{max}^-}\phi(t)\geq \min \left \{\frac{4}{N^2}, \frac{9}{4\rho_0a_0^4}\right \} \Rightarrow T_\mathrm{max}=+\infty \text{ and } H(t)>0\text{ for all $t\geq 0$}.
\end{gathered}
\right \}
\end{equation}
\end{Theorem}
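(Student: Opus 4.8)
The plan is to establish the chain of implications $(a)\Rightarrow(b)\Rightarrow(c)\Rightarrow(a)$ for $k=0,-1$, and then to extract the weaker statements for $k=1$ by examining which steps survive. The backbone is the interplay between the constraint equation \eqref{CONST}, which for $k=0,-1$ reads $H^2 = \frac13(\rho+\phi) + \frac{|k|}{a^2}$, the monotonicity of $\phi$ (decreasing, by \eqref{COSMO}), and Lemma~\ref{Lemmablowup} applied to a suitable quantity built from $H$.

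\textbf{Step 1: $(a)\Rightarrow(b)$.} Suppose $H(t_\ast)\leq 0$ for some $t_\ast$. I would argue that once $H$ is nonpositive it stays strictly negative and in fact blows up to $-\infty$. The tool is \eqref{Hdot}: $\dot H = -\frac16(\rho+3\mathcal P) - H^2 + \frac13\phi$. Using the bound $\mathcal P\leq \rho/3$ from \eqref{estrhop} gives $\rho+3\mathcal P\leq 2\rho$, and one wants to dominate $\frac13\phi - \frac13\rho$ — but this is not obviously negative. The cleaner route is to use the constraint: for $k=0,-1$, $\frac13\phi = H^2 - \frac13\rho - \frac{|k|}{a^2} \leq H^2 - \frac13\rho$, hence $\dot H \leq -\frac16(\rho+3\mathcal P) - H^2 + H^2 - \frac13\rho = -\frac12\rho - \frac12\mathcal P \leq -\frac12\rho$, recovering \eqref{FRIED} (with the $k$-term dropped favorably). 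So actually $\dot H \leq -\frac12\rho + \frac{k}{a^2} \leq -\frac12\rho \leq 0$ when $k=0,-1$, i.e. $H$ is monotone decreasing. Then if $H(t_\ast)\leq 0$, combining $\dot H\leq -\frac{\rho+\mathcal P}{2}$ with $\rho+\mathcal P \geq \frac43\rho - \frac{N}{3a^3}$... — more directly: once $H\le 0$, $a$ is nonincreasing, so $a\le a(t_\ast)$, so $\rho\geq N/a^3 \geq N/a(t_\ast)^3 =: c>0$, giving $\dot H \leq -c/2 < 0$, forcing $H\to -\infty$ in finite time, whence $a\to 0$ (since $\dot a/a = H \to -\infty$), so $\phi\to +\infty$ by \eqref{COSMO} integrated ($\dot\phi = -3\sigma N/a^3 \to -\infty$, so $\phi$ decreases, and in fact $\int a^{-3}$ diverges as $a\to 0$)... wait, I need $\lim\phi \geq 0$ to fail. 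Since $\dot\phi = -3\sigma N a^{-3}$ and $a^{-3}\to\infty$ non-integrably near the crunch, $\phi\to -\infty$, contradicting (a). Hence (b) holds.

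\textbf{Step 2: $(b)\Rightarrow(c)$.} If $H>0$ on $[0,T_{\max})$, then $a$ is increasing, so $a\geq a_0>0$, and Lemma~\ref{nobigcrunch} gives $T_{\max}=+\infty$. \textbf{Step 3: $(c)\Rightarrow(a)$.} If $T_{\max}=+\infty$ but $\lim\phi < 0$, then for large $t$ the constraint $H^2 = \frac13(\rho+\phi) + \frac{|k|}{a^2}$ with $\rho\to 0$ (from Lemma~\ref{charrho}, $\rho \leq (\rho_0a_0^3 + 3\sigma Nt)a^{-3}$, provided $H\geq 0$ — but if $H$ changes sign we're in the situation of Step~1 again and get finite-time crunch, contradiction) would force $H^2 < 0$ eventually unless $a^{-2}$ compensates; for $k=0$ this is immediate, for $k=-1$ one needs $\rho + \phi \geq 0$, which fails once $\phi$ is sufficiently negative and $\rho$ small — this requires showing $\rho\to 0$, which follows from $a\to\infty$ (itself from $H>0$ bounded below away from $0$, or a separate argument that $a$ cannot stay bounded). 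The main obstacle is making Step~3 airtight: ruling out the scenario where $a$ stays bounded while $T_{\max}=\infty$, and handling the $k=-1$ constraint delicately.

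\textbf{The $k=1$ case.} Here the constraint is $H^2 = \frac13(\rho+\phi) - \frac1{a^2}$, and the sign flips: the $-1/a^2$ term now obstructs Step~1 (we can no longer absorb it favorably) and obstructs the conclusion $H>0$. For $T_{\max}=+\infty\Rightarrow \lim\phi\geq 0$: if $\lim\phi<0$, I would show $H$ must become negative (else $a\to\infty$, $\rho\to 0$ by Lemma~\ref{charrho}, and $H^2 = \frac13(\rho+\phi)-a^{-2}<0$ eventually, a contradiction), and once $H<0$ argue as in Step~1 toward a crunch contradicting $T_{\max}=\infty$. For the sufficient condition: assuming $\lim\phi\geq \min\{4/N^2,\, 9/(4\rho_0a_0^4)\}$, I would show $H$ stays positive. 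Suppose not; let $t_0$ be the first zero of $H$. At $t_0$, $\eqref{CONST}$ gives $\frac13(\rho(t_0)+\phi(t_0)) = 1/a(t_0)^2$. On $[0,t_0]$, $H\geq 0$ so $a$ increasing, $a(t_0)\geq a_0$, and Lemma~\ref{charrho} gives $\rho(t_0)a(t_0)^3 \leq \rho_0a_0^3 + 3\sigma N t_0$; also $\rho(t_0)\geq N/a(t_0)^3$. The hypothesis on $\phi$ (which is decreasing, so $\phi(t_0)\geq \lim\phi \geq$ the stated min) should, via these inequalities and the constraint, yield a contradiction — the two alternatives $4/N^2$ and $9/(4\rho_0a_0^4)$ presumably come from optimizing a quadratic in $a(t_0)$ or in $\rho(t_0)$ against the constraint $\frac13(\rho+\phi)=1/a^2$ combined with $\rho\geq N/a^3$ and $\rho a^3\geq \rho_0a_0^3$. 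I expect the algebra balancing these three inequalities to produce exactly that minimum, and verifying $H$ then stays positive for all $t\geq 0$ gives $a\geq a_0$ and Lemma~\ref{nobigcrunch} finishes.
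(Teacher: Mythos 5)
Your skeleton (a)$\Rightarrow$(b)$\Rightarrow$(c)$\Rightarrow$(a) is the right one and your Step 2 coincides with the paper, but the two hard implications are not actually established. For (a)$\Rightarrow$(b) you take a dynamical detour that contains real errors: $\dot H\leq -c/2$ only gives linear decrease of $H$, not ``$H\to-\infty$ in finite time''; and the non-integrability of $a^{-3}$ near the crunch is asserted, not proved (proving it amounts to the $a(t)\lesssim\sqrt{T_{\max}-t}$ analysis of the blow-up theorem, which is not available here). All of this is unnecessary: since $\phi$ is strictly decreasing, (a) forces $\phi(t)>0$ on $[0,T_{\max})$, and then \eqref{CONST} with $k\leq 0$ gives $H^2\geq\tfrac13(\rho+\phi)>0$, so $H$ never vanishes and, as $H_0>0$, stays positive --- that is the whole proof. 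More seriously, (c)$\Rightarrow$(a) is exactly where you concede ``the main obstacle is making Step 3 airtight,'' and the missing idea is the paper's key mechanism: if $\lim\phi<0$, pick $T_0$ with $\phi<0$ on $[T_0,T_{\max})$; then \eqref{Hdot} gives simultaneously $\dot H\leq\tfrac13\phi(T_0)<0$ (so, were $T_{\max}=\infty$, $H$ becomes negative at some finite $T_1$) and $\dot H\leq -H^2$, whereupon Lemma~\ref{Lemmablowup} forces $H\to-\infty$ in finite time, contradicting $T_{\max}=\infty$. This single argument needs no control of $\rho\to0$ or $a\to\infty$, works for all $k=-1,0,1$, and also yields the first $k=1$ implication, for which your sketch (``$H\geq0$ forever implies $a\to\infty$, hence $\rho\to0$'') is itself unjustified when $a$ stays bounded.

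The second $k=1$ statement is likewise not proved: you only say you ``expect the algebra'' to produce the stated minimum, and the inequality $\rho a^3\geq\rho_0a_0^3$ you propose to use is neither established nor the right one (from Lemma~\ref{charrho}, $\rho a^3$ has an indefinite derivative $-3H\mathcal{P}a^3+3\sigma N$). The actual constants come from two separate lower bounds on $H^2$ via \eqref{CONST}: the threshold $4/N^2$ from $\rho\geq N/a^3$ and the fact that $x\mapsto\tfrac13(Nx^{-3}+4N^{-2})-x^{-2}$ has minimum value $0$ at $x=N/2$; the threshold $9/(4\rho_0a_0^4)$ from propagating $\tfrac{d}{dt}(\rho a^4)\geq0$ (valid while $H\geq0$, using \eqref{rhodot}, $\mathcal{P}\leq\rho/3$ and $\dot\phi<0$), i.e. $\rho\geq\rho_0a_0^4a^{-4}$, together with the minimum $-3/(4\rho_0a_0^4)$ of $x\mapsto\tfrac13\rho_0a_0^4x^{-4}-x^{-2}$. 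Without these bounds and the bootstrap showing $H$ cannot reach zero, the $k=1$ global-existence claim remains a conjecture in your write-up; and note your route would in any case give a constant of the form $4/(\rho_0a_0^3)^2$, not the one in the statement.
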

\begin{proof}
We first prove the equivalence of the statements (a), (b), (c) in the cases $k = 0, -1$.

(a)$\Rightarrow$(b): Assuming (a) and using the fact that $\phi(t)$ is strictly decreasing on $ [0,T_\mathrm{max})$,  we have $\phi(t)>0$ for all $t\in [0,T_\mathrm{max})$. Hence~\eqref{CONST} gives $H(t)>0$, for all $t\in [0,T_\mathrm{max})$ when $k=0,-1$.  

(b)$\Rightarrow$(c): As $H(t)$ is strictly positive for $t\in [0,T_\mathrm{max})$, then $a(t)$ is increasing and \ed{so $a(t)\geq a_0$ for all $t\in [0,T_\mathrm{max})$}.  Statement (c) then follows immediately by Lemma~\ref{nobigcrunch}.

Next, we prove (c)$\Rightarrow$(a), as well as the analogous statement for $k=1$.
We do so by using the contrapositive $\sim$(a) $\Rightarrow \sim$(c), i.e.~by showing that negative values of $\phi$ imply finite-time blow-up of solutions for $k = -1, 0, 1$.
Assume $\lim_{t \to T_{\max}^-} \phi(t) < 0$.  Then, since $\dot{\phi}(t) < 0$ there is $T_0 \in (0,T_{\max})$ such that $\phi(t) < 0$ for all $t \in [T_0,T_{\max}).$
With this, \eqref{Hdot} implies 
$$\dot{H}(t) \leq - H(t)^2$$
for all $t \in [T_0,T_{\max})$.
Again using \eqref{Hdot}, we have 
$$\dot{H}(t) \leq \frac{1}{3} \phi(t) \leq \frac{1}{3} \phi(T_0)$$
for all $t \in [T_0, T_{\max})$.
Integrating over $[T_0, t]$, we have
\begin{equation}
\label{Hub}
H(t) \leq H(T_0) + \frac{1}{3}\phi(T_0) (t - T_0)
\end{equation}
for $t \in [T_0, T_{\max})$.
Finally, because $\phi(T_0) < 0$, if $T_{\max} = \infty$ we may take $t$ large enough in \eqref{Hub} so that $H(T_1) < 0$ where $T_1 > \max \left \{ T_0, T_0 - \frac{3H(T_0)}{\phi(T_0)} \right \}$.
Applying Lemma \ref{Lemmablowup} at $t = T_1$, this implies $T_{\max} < \infty$ contradicting the assumption $T_{\max} = \infty$.  Thus, in all cases we find $T_{\max} < \infty$.

%

Finally, we complete the proof by showing that either positive lower bound on $\phi$ in the case $k=1$ implies global existence.
First, assume $$\lim_{t\to T_\mathrm{max}^-}\phi(t)\geq\frac{4}{N^2}.$$
As before, since $\phi$ is decreasing, we have $\phi(t) > \frac{4}{N^2}$ for all $t \in [0,T_{\max})$.
Using the lower bound $\rho(t)\geq N/a(t)^3$ in~\eqref{CONST} we obtain
$$H(t)^2 > \frac{1}{3} \left ( \frac{N}{a(t)^3} + \frac{4}{N^2}\right) - \frac{1}{a(t)^2}.$$
Defining the function
$$g(x) = \frac{1}{3} \left ( \frac{N}{x^3} + \frac{4}{N^2} \right) - \frac{1}{x^2}$$
for all $x > 0$, we find
$$g'(x) = \frac{-N + 2x}{x^4}.$$
Therefore, $g'(x) = 0$ only at $x = \frac{N}{2}$, and $g$ is minimized at this point.
Thus,
$$H(t)^2 > g(a(t)) \geq g \left (\frac{N}{2} \right ) = 0.$$
This implies that $H$ does not change sign and thus $H(t) > 0$ on $[0,T_{\max})$.
Hence, $\dot{a}(t) > 0$, which implies \ed{$a(t)\geq a_0$ for all $t\in [0,T_\mathrm{max})$}, and $T_{\max} = \infty$ follows from Lemma~\ref{nobigcrunch}.

Now, if we instead assume $$\lim_{t\to T_\mathrm{max}^-}\phi(t)\geq\frac{9}{4\rho_0a_0^4},$$
then define
$$T^* = \sup \left \{t \geq 0 : H(s) > 0 \text{ for all } s\in [0,t] \right \},$$
and note that $T^* > 0$ because $H_0 > 0$.
For $t \in [0,T^*)$ we use \eqref{rhodot} to find
$$\dot{\rho} = -3H(\rho + \mathcal{P}) - \dot{\phi} \geq -4H\rho,$$
and thus
$$\frac{d}{dt} \left [a(t)^4 \rho(t) \right ] \geq 0.$$
Hence, a lower bound on $\rho$ follows, namely
$$\rho(t) \geq \rho_0a_0^4 a(t)^{-4}.$$
Thus, on the same time interval, we use \eqref{CONST} to find
$$H(t)^2 \geq \frac{1}{3} \phi(t) + \frac{1}{3} \rho_0 a_0^4 a(t)^{-4} - a(t)^{-2}.$$
Defining the function
$$h(x) = \frac{1}{3} \rho_0 a_0^4 x^{-4} - x^{-2}$$
for $x > 0$, we find its minimum value occurs when $x^2 = \frac{2}{3} \rho_0 a_0^4$, and at this value
$h(x) = -\frac{3}{4\rho_0a_0^4}$.
Thus, for every $t \in [0,T^*)$
$$H(t)^2 \geq \frac{1}{3} \phi(t) + h(a(t)) \geq \frac{1}{3} \phi(t) - \frac{3}{4\rho_0a_0^4}.$$
Using the above assumption on the strictly decreasing function $\phi(t)$, we see that $H(T^*)$ remains positive, and as in the previous argument this implies $T^* = T_{\max} = \infty$. 
\end{proof}

\subsection{Initial data for global existence and blowup}
Our next goal is to prove that for each $k=0,\pm 1$ there exist conditions on the initial data for which the solution is global and conditions which imply finite time blow-up.  There are many ways to express these conditions; one is to write them in the form $\phi_0<\dots\Rightarrow$ blow-up and $\phi_0>\dots\Rightarrow$ global existence, where the right hand side of the inequality depends on $N,H_0,a_0$ and the given constants $k$ and $\sigma$ (but not on $\phi_0)$.  When expressed in this form it is straightforward to show that the conditions on the initial data derived in the present section are compatible with the constraint equation~\eqref{constin}. We begin with the blow-up results.

\begin{Theorem}
Let $k=0,-1$. Then for 
\begin{equation}
\label{blowup1}
\phi_0<\frac{\sigma N}{H_0 a_0^3},
\end{equation}
the regular solution blows-up in finite time, i.e., $T_\mathrm{max}<\infty$. 
If $k=1$, then the solution blows-up in finite time when
\begin{equation}\label{blowup3}
\phi_0<\frac{9\sigma N^2}{4 a_0^3}\sqrt{\frac{\pi}{2}}\mathrm{Erfc}(x)e^{x^2},\quad x=\frac{9H_0N}{4\sqrt{2}},
\end{equation}
where $\mathrm{Erfc}(z)$ denotes the \ed{complementary} error-function.
\end{Theorem}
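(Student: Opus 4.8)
The strategy is the same in all three cases and proceeds by contradiction: assume $T_{\mathrm{max}}=+\infty$, derive an \emph{a priori} upper bound on the scale factor $a(t)$ valid for every $t\ge 0$, integrate the equation~\eqref{COSMO} for the scalar field against this bound, and observe that the hypothesis on $\phi_0$ forces $\lim_{t\to\infty}\phi(t)<0$. This contradicts Theorem~\ref{condblowup}, according to which $T_{\mathrm{max}}=+\infty$ implies $\lim_{t\to T_{\mathrm{max}}^-}\phi(t)\ge 0$ for every $k\in\{0,\pm1\}$, and therefore $T_{\mathrm{max}}<+\infty$, as claimed. (If $T_{\mathrm{max}}<+\infty$ already, there is nothing to prove.)

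The key step is the bound on $a$. When $k=0,-1$ it is immediate: since $\rho+\mathcal{P}>0$ (because $F_0\not\equiv 0$, hence $N>0$) and $k\le 0$, equation~\eqref{FRIED} gives $\dot H\le 0$, so $H(t)\le H_0$ and therefore
\[
a(t)=a_0\exp\Big(\int_0^tH(s)\,ds\Big)\le a_0e^{H_0t},\qquad t\ge0.
\]
When $k=1$ the function $H$ need not be monotone, so instead we estimate $\dot H$ from above. Discarding the nonnegative pressure and using $\rho\ge Na^{-3}$ from~\eqref{estrhop} in~\eqref{FRIED} yields
\[
\dot H(t)\le\frac{1}{a(t)^2}-\frac{N}{2a(t)^3}\le\max_{s>0}\Big(\frac{1}{s^2}-\frac{N}{2s^3}\Big)=\frac{16}{27N^2},
\]
the maximum being attained at $s=3N/4$. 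Integrating twice gives $\int_0^tH\le H_0t+\tfrac{8}{27N^2}t^2$, hence
\[
a(t)\le a_0\exp\Big(H_0t+\frac{8}{27N^2}t^2\Big),\qquad t\ge0.
\]
Note that neither bound requires any information on the sign of $H$.

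Feeding these bounds into~\eqref{COSMO}, which integrates to $\phi(t)=\phi_0-3\sigma N\int_0^ta(s)^{-3}\,ds$, and letting $t\to\infty$ (the limit of the decreasing function $\phi$ exists in $[-\infty,\phi_0)$), we obtain
\[
\lim_{t\to\infty}\phi(t)\le\phi_0-\frac{3\sigma N}{a_0^3}\int_0^\infty e^{-3H_0s-\beta s^2}\,ds,
\]
with $\beta=0$ for $k=0,-1$ and $\beta=\tfrac{8}{9N^2}$ for $k=1$. For $k=0,-1$ the integral equals $(3H_0)^{-1}$ and the right-hand side reduces to $\phi_0-\sigma N/(H_0a_0^3)$, which is negative by~\eqref{blowup1}. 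For $k=1$ one completes the square in the exponent and substitutes to recognize a complementary error function, obtaining
\[
\int_0^\infty e^{-3H_0s-\frac{8}{9N^2}s^2}\,ds=\frac{3N}{4}\sqrt{\frac{\pi}{2}}\,\mathrm{Erfc}(x)\,e^{x^2},\qquad x=\frac{9H_0N}{4\sqrt2},
\]
so that the right-hand side becomes $\phi_0-\tfrac{9\sigma N^2}{4a_0^3}\sqrt{\pi/2}\,\mathrm{Erfc}(x)e^{x^2}$, negative exactly under~\eqref{blowup3}. In either case $\lim_{t\to\infty}\phi(t)<0$, which is the sought contradiction.

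The parts requiring care are, for $k=1$, the choice to bound $\rho+\mathcal{P}$ below by $Na^{-3}$ and then optimize over the scale factor — this is precisely what produces an upper bound on $\dot H$, and ultimately the Gaussian factor in the bound on $a$, with no control on the sign of $H$ — together with the explicit evaluation of the Gaussian integral, where one must verify that completing the square reproduces exactly the argument $x=9H_0N/(4\sqrt2)$ and the constant $\tfrac94\sqrt{\pi/2}$ appearing in~\eqref{blowup3}. It is worth recording the consistency check that, as $H_0N\to\infty$, the asymptotics $\mathrm{Erfc}(x)e^{x^2}\sim(x\sqrt\pi)^{-1}$ reduce~\eqref{blowup3} to the condition $\phi_0<\sigma N/(H_0a_0^3)$ of the $k\le 0$ case, as expected since a large Hubble rate renders the spatial-curvature term negligible.
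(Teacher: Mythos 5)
Your proposal is correct and follows essentially the same route as the paper: assume $T_{\mathrm{max}}=\infty$, bound $a(t)\le a_0e^{H_0t}$ for $k\le 0$ (via $\dot H\le 0$) and $a(t)\le a_0\exp(H_0t+\tfrac{8}{27N^2}t^2)$ for $k=1$ (via $\dot H\le \tfrac{16}{27N^2}$ from $\rho\ge Na^{-3}$), integrate \eqref{COSMO} to force $\lim_{t\to\infty}\phi(t)<0$, and contradict Theorem~\ref{condblowup}. The Gaussian integral evaluation and the constants in \eqref{blowup3} check out exactly as in the paper.
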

\begin{proof}
First, suppose that~\eqref{blowup1} holds and assume $T_\mathrm{max}=+\infty$.  
By~\eqref{FRIED} we have $H(t)\leq H_0$, and hence for all $t \in [0,T_{\max})$ it follows that
$a(t)\leq a_0 e^{H_0t}.$
Integrating~\eqref{COSMO} 
we obtain
\[
\phi(t)=\phi_0-3\sigma N\int_0^t\frac{ds}{a(s)^3}\leq\phi_0+\frac{\sigma N}{a_0^3H_0}(e^{-3H_0t}-1)\to\phi_0-\frac{\sigma N}{a_0^3 H_0}.
\] 
Hence under condition~\eqref{blowup1} we have $\lim_{t\to+\infty}\phi(t)<0$, and so, by Theorem~\ref{condblowup}, $T_\mathrm{max} < \infty$.  This contradicts the original assumption and implies $T_{\max} < \infty$ in the cases $k=0,-1$.


In the case $k=1$, the bound on $H$ is much weaker. 
Hence, let us suppose that ~\eqref{blowup3} holds and assume $T_\mathrm{max}=+\infty$.  
We first notice that by~\eqref{COSMO} and the bound $\rho(t) \geq N/a(t)^3$, 
\[
\dot H\leq \frac{1}{a(t)^2}-\frac{N}{2a(t)^3}\leq \max_{x>0} \left (x^{-2}-\frac{N}{2} x^{-3} \right)=\frac{16}{27 N^2}.
\]
Hence, $H\leq H_0+\frac{16}{27 N^2}t$, which gives
\[
a(t)\leq a_0\exp \left (H_0t+\frac{8}{27 N^2}t^2 \right)=a_0e^{-\frac{27}{32}H_0^2N^2}\exp \left (\frac{1}{N}\sqrt{\frac{8}{27}}\,t+\sqrt{\frac{27}{32}}H_0N \right)^2.
\]
Integrating~\eqref{COSMO} we obtain
\begin{align*}
\phi(t)&\leq \phi_0-\frac{3\sigma N}{a_0^3}e^{\frac{81}{32}H_0^2N^2}\int_0^t\exp \left [-3 \left (\frac{1}{N}\sqrt{\frac{8}{27}}\,s+\sqrt{\frac{27}{32}}H_0N \right)^2 \right]ds\\
&\to \phi_0-\frac{9\sigma N^2}{4 a_0^3}\sqrt{\frac{\pi}{2}}\mathrm{Erfc}(x)e^{x^2},\quad x=\frac{9H_0N}{4\sqrt{2}},
\end{align*}
as $t\to\infty$.
Thus, under condition~\eqref{blowup3} we have $\lim_{t\to+\infty}\phi(t)<0$, and by Theorem~\ref{condblowup} we reach a contradiction. 
\end{proof}
%
With this, we turn to initial data which launch a global-in-time solution.
\begin{Theorem}\label{globalk}
\label{Tge}
Let $k=-1,0$.  Then for
\begin{equation}
\label{ge2}
\phi_0\geq 3\frac{\sigma N}{H_0a_0^3},
\end{equation}
the regular solution is global, i.e., $T_\mathrm{max}=\infty$ and 
\[
\lim_{t\to\infty}\phi(t)\geq \phi_0- 3\frac{\sigma N}{H_0a_0^3}.
\]
\end{Theorem}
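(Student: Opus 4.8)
The plan is to show that $\phi(t)$ stays positive on $[0,T_\mathrm{max})$, so that Theorem~\ref{condblowup} immediately forces $T_\mathrm{max}=\infty$, and then to quantify how far $\phi$ can descend. The engine is a differential inequality for $H$. On any subinterval of $[0,T_\mathrm{max})$ on which $\phi\geq 0$, the constraint~\eqref{CONST} together with $k\leq 0$ gives $\rho=3H^2-\phi+\frac{3k}{a^2}\leq 3H^2$, and since $\mathcal{P}\leq \rho/3$ by~\eqref{estrhop} we get $\rho+3\mathcal{P}\leq 2\rho\leq 6H^2$; feeding this and $\phi\geq 0$ into the auxiliary equation~\eqref{Hdot} yields $\dot H\geq -2H^2$. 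Note also that on such a subinterval $\rho>0$ together with $\phi\geq 0$ makes the right-hand side of~\eqref{CONST} strictly positive, so $H^2>0$ there; since $H$ is continuous and $H_0>0$, this gives $H>0$ on the subinterval.

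Next I would convert $\dot H\geq -2H^2$ into growth of $a$. Dividing by $-H^2<0$ gives $\frac{d}{dt}(H^{-1})\leq 2$, hence $H(t)\geq H_0(1+2H_0 t)^{-1}$, and integrating once more $\int_0^t H\,ds\geq \tfrac12\log(1+2H_0 t)$. Since $a(t)=a_0\exp\!\big(\int_0^t H\,ds\big)$, we obtain $a(t)\geq a_0(1+2H_0 t)^{1/2}$ and therefore
\[
\int_0^t a(s)^{-3}\,ds\;\leq\; a_0^{-3}\int_0^\infty (1+2H_0 s)^{-3/2}\,ds\;=\;\frac{1}{H_0 a_0^3},
\]
with strict inequality when $t<\infty$. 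Combining this with~\eqref{COSMO}, which integrates to $\phi(t)=\phi_0-3\sigma N\int_0^t a(s)^{-3}\,ds$, and with the hypothesis~\eqref{ge2} rewritten as $\frac{1}{H_0 a_0^3}\leq \frac{\phi_0}{3\sigma N}$, we conclude $\phi(t)>0$ for every finite $t$ in the subinterval.

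The bootstrap then closes the loop. Let $T_1=\sup\{t\in[0,T_\mathrm{max}): \phi>0 \text{ on }[0,t]\}$; since $\phi_0>0$ we have $T_1>0$, and the two preceding paragraphs apply on $[0,T_1)$. Letting $t\uparrow T_1$ in $\phi(t)=\phi_0-3\sigma N\int_0^t a^{-3}$ gives, for $T_1<\infty$, $\phi(T_1)\geq \phi_0-3\sigma N\int_0^{T_1}a^{-3}>\phi_0-\frac{3\sigma N}{H_0 a_0^3}\geq 0$; were $T_1<T_\mathrm{max}$, continuity of $\phi$ would keep $\phi>0$ slightly past $T_1$, contradicting the definition of $T_1$. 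Hence $T_1=T_\mathrm{max}$ and $\phi>0$ on all of $[0,T_\mathrm{max})$, so $\lim_{t\to T_\mathrm{max}^-}\phi(t)\geq 0$ and Theorem~\ref{condblowup} yields $T_\mathrm{max}=\infty$ (with $H>0$ throughout). Finally, since now $\phi>0$ and $H>0$ on all of $[0,\infty)$, the estimate above holds with $t=\infty$, and $\lim_{t\to\infty}\phi(t)=\phi_0-3\sigma N\int_0^\infty a^{-3}\,ds\geq \phi_0-\frac{3\sigma N}{H_0 a_0^3}$, which is the claimed bound.

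I expect the delicate point to be getting the constant in $\dot H\geq -2H^2$ sharp: it is precisely this factor $2$ (from bounding $\rho+3\mathcal{P}$ by $6H^2$ and absorbing the stand-alone $-H^2$ term in~\eqref{Hdot}) that makes $\int_0^\infty a^{-3}$ equal exactly $\frac{1}{H_0 a_0^3}$ and hence matches the threshold $3\sigma N/(H_0 a_0^3)$ in~\eqref{ge2}; a cruder lower bound for $a$, e.g.\ from $H^2\geq \rho/3\geq N/(3a^3)$ alone, would give a different, non-comparable condition on $\phi_0$. A secondary bookkeeping issue is the restriction $k=0,-1$, which is used both for $\rho\leq 3H^2$ (the term $-3k/a^2$ must be nonnegative) and to invoke the equivalences of Theorem~\ref{condblowup}.
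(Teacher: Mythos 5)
Your proposal is correct and follows essentially the same route as the paper: the same differential inequality $\dot H\geq-2H^2$ obtained from \eqref{Hdot}, \eqref{CONST} and $\mathcal{P}\leq\rho/3$ on the interval where $\phi>0$, the same bounds $H(t)\geq H_0(1+2H_0t)^{-1}$ and $a(t)\geq a_0(1+2H_0t)^{1/2}$, the same continuation/bootstrap argument via the supremum time, and the same appeal to Theorem~\ref{condblowup} together with the integrated form of \eqref{COSMO} for the limit bound on $\phi$.
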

\begin{proof}
To show global existence under condition~\eqref{ge2}, we define
$$T^* = \sup \left \{ t \geq 0 : \phi(\tau) > 0  \ \mathrm{for \ all} \ \tau \in [0,t] \right \}.$$   
Note that $T_\mathrm{max} \geq T^*$ by Theorem~\ref{condblowup}. Moreover,~\eqref{CONST} implies that $H(t)>0$ for $t\in [0,T^*)$. By~\eqref{Hdot},~\eqref{CONST} and the bound $\mathcal{P}\leq \rho/3$ we have
\[
\dot H\geq \frac{1}{3}\phi-\frac{1}{3}\rho-H^2=\frac{2}{3}\phi-\frac{k}{a^2}-2H^2\geq-2H^2, \quad t\in [0,T^*).
\]  
Integrating and using the fact that $H(t)>0$ on $[0,T^*)$ we obtain
\[
H(t)\geq\frac{H_0}{1+2H_0t},\quad \text{ for $t\in[0,T^*)$}
\]
and so
\[
a(t)\geq a_0(1+2H_0t)^{1/2},\quad\text{for $t\in[0,T^*$}).
\]
Using~\eqref{COSMO} we get
\begin{equation}\label{noname}
\phi(t)\geq\phi_0-\frac{3\sigma N}{a_0^3}\int_0^t\frac{ds}{(1+2H_0s)^{3/2}}>\phi_0-\frac{3\sigma N}{a_0^3 H_0}.
\end{equation}
Hence, under condition~\eqref{ge2}, if $T^*<T_\mathrm{max}$ we would have $\phi(T^*)>0$, which contradicts the maximality of $T^*$. It follows that $T^*=T_\mathrm{max}=+\infty$. The lower bound on the limit of $\phi(t)$ follows by letting $t\to\infty$ in~\eqref{noname}.
\end{proof}

Notice that the last two results suggest a particular quantity involving initial data that can predict the finite or infinite lifespan of the smooth solution in the cases of $k=-1,0$. More specifically,  if $\frac{\phi_0H_0 a_0^3}{\sigma N}$ is large enough then the solution must be global, while if this quantity is made too small, then solutions must blow-up in finite time.
As before, the situation for $k=1$ is more complicated, and the next result establishes conditions on initial data that guarantee global existence in this case.
\begin{Theorem}\label{globalk1}
Let $k=1$. Then, the solution is global under the conditions
\begin{equation}
\label{gek1a}
\frac{3\sigma}{H_0} \leq 1 \qquad \mathrm{and} \qquad \rho_0 a_0^2 < \frac{3}{2}
\end{equation}
or the conditions
\begin{equation}
\label{gek1b}
\frac{3\sigma}{H_0} > 1 \qquad \mathrm{and} \qquad \rho_0 a_0^2 \leq \frac{3}{2} \left (\frac{H_0}{3\sigma} \right ) \exp \left (1 - \frac{H_0}{3\sigma} \right ).
\end{equation}

\end{Theorem}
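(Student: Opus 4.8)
The plan is to adapt the strategy of Theorem~\ref{globalk} to the case $k=1$: find a time-dependent lower bound for $a(t)$ on the interval where $H>0$, plug it into \eqref{COSMO}, and show that under the stated hypotheses $\phi$ stays positive up to $T_\mathrm{max}$, so that global existence follows from Theorem~\ref{condblowup}. As before, set
$$T^* = \sup\{t\geq 0 : \phi(\tau)>0 \text{ for all } \tau\in[0,t]\},$$
so that $H>0$ on $[0,T^*)$ by \eqref{CONST}, and note $T_\mathrm{max}\geq T^*$. On $[0,T^*)$ the estimate $\mathcal{P}\leq\rho/3$ together with the lower bound $\rho\geq N/a^3$ and \eqref{Hdot}, \eqref{CONST} give (since now $k/a^2>0$ works \emph{against} us) a differential inequality for $H$ of the form $\dot H\geq -2H^2 - \frac{1}{a^2}$ or, more usefully, one should instead track $a(t)$ directly. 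Following the computation in Theorem~\ref{condblowup}, on $[0,T^*)$ one has $\dot\rho = -3H(\rho+\mathcal{P})-\dot\phi\geq -4H\rho$ (using $\mathcal{P}\leq\rho/3$ and $\dot\phi<0$), hence $\frac{d}{dt}(a^4\rho)\geq 0$ and so $\rho(t)\geq \rho_0 a_0^4 a(t)^{-4}$. Substituting into \eqref{CONST},
$$H(t)^2 \geq \frac{1}{3}\phi(t) + \frac{1}{3}\rho_0 a_0^4 a(t)^{-4} - a(t)^{-2} \geq \frac{1}{3}\phi(t) + h(a(t)),\qquad h(x):=\tfrac{1}{3}\rho_0 a_0^4 x^{-4}-x^{-2},$$
exactly as in the last part of the proof of Theorem~\ref{condblowup}; the minimum of $h$ is $-\frac{3}{4\rho_0 a_0^4}$.

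The key new ingredient is to obtain a \emph{good} lower bound on $a(t)$ that does not degrade too fast. Here I expect one should use the Hubble equation in the form $H^2\geq \frac13\phi - \frac{3}{4\rho_0 a_0^4}$ is too crude when the right side can be negative; instead I would exploit $\dot a = Ha$ combined with an upper bound on how fast $\phi$ decays. From \eqref{COSMO}, $\dot\phi = -3\sigma N a^{-3}$, so $\phi$ decreases, and pairing this with $\dot a = Ha$ and $H^2\geq\frac13\phi+h(a)$ one can try to derive an autonomous differential inequality in the $(a,\phi)$ plane, or equivalently change variables to $s=\log a$ and study $\phi$ as a function of $s$: since $\frac{d\phi}{ds}=\frac{\dot\phi}{H} = -\frac{3\sigma N}{a^3 H}$ and $H\geq\sqrt{\frac13\phi+h(a)}$ on the region where that is positive, one gets $\frac{d\phi}{ds}\geq -\frac{3\sigma N}{a^3\sqrt{\frac13\phi+h(a)}}$. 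The two alternative hypotheses \eqref{gek1a} and \eqref{gek1b}, which involve $\rho_0 a_0^2$ compared to $\frac32$ (resp. to $\frac32\frac{H_0}{3\sigma}e^{1-H_0/3\sigma}$) and the dichotomy $3\sigma/H_0\lessgtr 1$, strongly suggest that the argument reduces, after these substitutions and using $H_0^2 = \frac13(\rho_0+\phi_0)-a_0^{-2}$, to checking that a certain explicit scalar function (built from $x=\rho_0 a_0^2$ and $y=3\sigma/H_0$) stays nonnegative — the transcendental expression $\frac{H_0}{3\sigma}\exp(1-\frac{H_0}{3\sigma})$ being precisely the maximum of $y e^{1-y}$-type curve coming from optimizing over the interval where $H$ could conceivably vanish.

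Concretely, the steps I would carry out are: (1) set up $T^*$ and record $H>0$, $a\geq a_0$, $\rho\geq\rho_0 a_0^4 a^{-4}$ on $[0,T^*)$; (2) derive the bound $H^2\geq\frac13\phi+h(a)$ and identify the threshold $a_*$ with $h(a_*)=-\frac{3}{4\rho_0 a_0^4}$, noting $\rho_0 a_0^2<\frac32$ is exactly the condition $a_*>a_0$ (or the reverse), so that near $t=0$ the term $h(a)$ is controlled; (3) bound $|\dot\phi|$ using $a\geq a_0$, giving $\phi(t)\geq\phi_0 - 3\sigma N a_0^{-3} t$ as a first crude bound, then bootstrap using the growth of $a$; (4) combine to show $\phi(T^*)>0$, contradicting maximality of $T^*$ unless $T^*=T_\mathrm{max}=\infty$; (5) finally, translate the resulting inequality on $(\phi_0,\rho_0,H_0,\sigma,a_0,N)$ — after eliminating $\phi_0$ via the constraint \eqref{constin} — into the clean dichotomous form \eqref{gek1a}–\eqref{gek1b} by an elementary optimization of the relevant one-variable function.

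The main obstacle I anticipate is step (3)–(5): getting a lower bound on $a(t)$ on $[0,T^*)$ that is strong enough. Unlike the $k=0,-1$ case, where $\dot H\geq -2H^2$ directly yielded $a(t)\geq a_0(1+2H_0 t)^{1/2}$ and hence a \emph{convergent} integral $\int a^{-3}\,dt$, here the extra $-k/a^2 = -a^{-2}$ term spoils the clean inequality $\dot H\geq -2H^2$ and one must instead feed the lower bound $\rho\geq\rho_0 a_0^4 a^{-4}$ back in, which only works while $a$ is not too large relative to $a_0$ — precisely where the condition $\rho_0 a_0^2 \lessgtr \frac32$ enters. Making the bootstrap close, and in particular showing that the transcendental quantity in \eqref{gek1b} is exactly the sharp constant that emerges, will require care in the optimization; everything else is a direct transcription of the $k=0,-1$ argument.
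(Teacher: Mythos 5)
There is a genuine gap. Your bootstrap is anchored on the positivity of $\phi$, and you assert that $H>0$ on $[0,T^*)$ ``by \eqref{CONST}''; this is precisely what fails when $k=1$: the constraint reads $H^2=\frac13(\rho+\phi)-a^{-2}$, so $\phi>0$ no longer prevents $H$ from reaching zero and turning negative. Everything downstream in your step (1)--(2) (the bound $a\geq a_0$, the inequality $\dot\rho\geq-4H\rho$ and hence $\rho\geq\rho_0a_0^4a(t)^{-4}$) depends on that sign of $H$. Moreover, your steps (3)--(5) --- the bootstrap closure that is supposed to produce the dichotomy \eqref{gek1a}--\eqref{gek1b} --- are not carried out, and you acknowledge as much; but that closure is the entire content of the theorem, and the route you sketch (a lower bound on $\rho$ fed into $H^2\geq\frac13\phi+h(a)$, as in the second half of Theorem~\ref{condblowup}, followed by elimination of $\phi_0$ through \eqref{constin}) is not the mechanism that yields hypotheses involving only $\rho_0a_0^2$ and $3\sigma/H_0$.

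The paper's proof runs differently: it bootstraps on $H$ itself, setting $T^*=\sup\{t\geq0: H(s)\geq NH_0/(\rho_0a_0^3)\ \mathrm{for\ all}\ s\in[0,t]\}$, which is positive because both hypotheses force $\rho_0a_0^2<\frac32$ and hence $\dot H(0)>0$ by \eqref{FRIED} and \eqref{estrhop}. On $[0,T^*)$ it uses the \emph{upper} bound on $\rho$ from Lemma~\ref{charrho}, namely $\rho(t)\leq(\rho_0a_0^3+3\sigma Nt)a(t)^{-3}$, together with the exponential lower bound $a(t)\geq a_0\exp\bigl(NH_0t/(\rho_0a_0^3)\bigr)$, inside $\dot H\geq-\frac23\rho+a^{-2}$, obtaining $\dot H\geq a^{-2}g(t)$ with $g(t)=1-\frac23(\rho_0a_0^3+3\sigma Nt)a_0^{-1}e^{-NH_0t/(\rho_0a_0^3)}$. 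Minimizing $g$ over $t\geq0$ gives the dichotomy directly: the minimum sits at $t=0$ when $3\sigma/H_0\leq1$, yielding condition \eqref{gek1a}, and at $t^*=\frac{\rho_0a_0^3}{3\sigma N}\bigl(\frac{3\sigma}{H_0}-1\bigr)>0$ otherwise, yielding \eqref{gek1b}; in either case $\dot H>0$ on the bootstrap interval, so $H>H_0\geq NH_0/(\rho_0a_0^3)$ persists, $T^*=T_{\max}=\infty$, $a$ stays bounded away from zero, and Lemma~\ref{nobigcrunch} concludes. This monitoring of $\dot H$ through the time-dependent upper bound on $\rho$, rather than monitoring $\phi$ through a lower bound on $\rho$, is the key idea missing from your sketch.
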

\begin{proof}
First, note that under either condition~\eqref{gek1a} or~\eqref{gek1b}, we have $\rho_0 a_0^2<3/2$; hence from \eqref{estrhop} and \eqref{FRIED}, it follows that $\dot{H}(0)>0$.
Define
$$T^*= \sup \left \{ t \geq 0 : H(s) \ \ed{\geq} \ \frac{NH_0}{\rho_0 a_0^3} \ \ \mathrm{for \ all} \ s \in [0,t] \right \}$$
and notice that $T^* > 0$ because $\rho_0 \geq \frac{N}{a_0^3}$ and $\dot{H}(0)>0$.
Then, for $t \in [0,T^*)$ \ed{Lemma}~\ref{charrho} yields
$$\rho(t) \leq (\rho_0 a_0^3 + 3\sigma N t) a(t)^{-3}$$
and similarly, the lower bound
$$ a(t) \geq a_0 \exp \left (\frac{NH_0}{\rho_0 a_0^3} t \right )$$
follows from the bound on $H(t)$. Additionally, on the same time interval \eqref{estrhop} and \eqref{FRIED} then imply
\begin{eqnarray*}
\dot{H}(t) & \geq & -\frac{2}{3} \rho(t)  + \frac{1}{a(t)^2}\\
& \geq & a(t)^{-2} \left [ 1- \frac{2}{3} (\rho_0 a_0^3  + 3\sigma Nt ) a(t)^{-1} \right ]\\
& \geq & a(t)^{-2} \left [ 1- \frac{2}{3} (\rho_0 a_0^3  + 3\sigma Nt ) a_0^{-1} \exp \left (-\frac{NH_0}{\rho_0 a_0^3} t  \right) \right ].
\end{eqnarray*}
Now, the function $$g(t) = 1- \frac{2}{3} (\rho_0 a_0^3  + 3\sigma Nt ) a_0^{-1} \exp \left (-\frac{NH_0}{\rho_0 a_0^3} t  \right)$$ is minimized at the point $$t^* = \frac{\rho_0 a_0^3}{3\sigma N} \left ( \frac{3\sigma}{H_0} - 1 \right ).$$
If $\frac{3\sigma}{H_0} \leq 1$, then $g(t)$ is increasing for all $t \geq 0$ and the minimum for nonnegative times occurs at $t = 0$.  Therefore, 
$$\dot{H}(t) \geq a(t)^{-2} g(0) = a(t)^{-2} \left ( 1- \frac{2}{3}\rho_0a_0^2 \right ) > 0$$
assuming that \eqref{gek1a} holds.
Instead, if $\frac{3\sigma}{H_0} > 1$ then $t^* > 0$.  Therefore, 
$$\dot{H}(t) \geq a(t)^{-2} g(t^*) = a(t)^{-2} \left [ 1- \frac{2\sigma \rho_0 a_0^2}{H_0} \exp \left (\frac{H_0}{3\sigma}  - 1  \right) \right ] > 0$$
assuming that \eqref{gek1b} holds.
In either case, $\dot{H}(t) > 0$ for all $t \in (0,T^*)$, which implies that $H(t) > H_0$ on the same interval.  Hence, the above lower bound on $a(t)$ continues on $[0,T^*)$, yielding a regular solution, and because $H(T^*) > H_0$ we find $T^* = \infty$.
\end{proof}

Our final result in this section is meant to unify the treatment for all three cases.
\begin{Theorem}\label{globalgeneral}
Let $k=-1,0,1$ and take $\beta \in (0,1)$. Then, denoting $k_+ = \max\{k,0\}$, the solution is global under the condition
\begin{equation}
\label{ge1}
\phi_0 \geq \frac{3}{\beta^2 a_0^2} \left [ k_+ + 3\left (\frac{\sigma N}{6} \right)^{2/3}\right ]
\end{equation}
and furthermore
\[
\lim_{t\to\infty}\phi(t)\geq \phi_0-\frac{6}{\beta^2a_0^2}\left(\frac{\sigma N}{6}\right)^{2/3}.
\]
\end{Theorem}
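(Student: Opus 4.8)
The plan is to mimic the structure of the proofs of Theorems \ref{globalk} and \ref{globalk1}: introduce a bootstrap time $T^*$ up to which $H$ stays positive (equivalently $\phi$ stays positive, or some convenient threshold holds), derive from positivity of $H$ a lower bound $a(t) \geq a_0(1+cH_0 t)^{1/2}$ of the same square-root type, feed this into \eqref{COSMO} to get a uniform lower bound on $\phi$, and then close the argument by showing this forces $\phi(T^*) > 0$ strictly, contradicting maximality of $T^*$ unless $T^* = T_\mathrm{max} = \infty$. The role of the parameter $\beta \in (0,1)$ will be to split the constraint-derived lower bound on $H^2$ into a piece that controls the curvature/pressure terms and a residual positive piece; this is what lets a single condition cover all three values of $k$.

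Concretely, first I would set $T^* = \sup\{t \geq 0 : \phi(s) > 0 \text{ for all } s \in [0,t]\}$, which is positive since $\phi_0 > 0$, and note $T_\mathrm{max} \geq T^*$ by Theorem \ref{condblowup} (its first implication in the $k=1$ case, and the equivalence for $k=0,-1$). On $[0,T^*)$, \eqref{CONST} together with $\phi > 0$ and the bound $k/a^2 \leq k_+/a^2$ gives $H^2 \geq \tfrac13(\rho+\phi) - k_+/a^2 > 0$, so $H > 0$, hence $a$ is increasing and $a(t) \geq a_0$. Next I would run the same differential inequality as in Theorem \ref{globalk}: using \eqref{Hdot}, $\mathcal{P} \leq \rho/3$, and \eqref{CONST}, one gets $\dot H \geq \tfrac23\phi - k/a^2 - 2H^2$; but now I want a sharper use of the constraint. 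Using Lemma \ref{charrho} with $H \geq 0$ gives $\rho(t) \leq (\rho_0 a_0^3 + 3\sigma N t)a(t)^{-3}$; the appearance of the exponent $2/3$ and the quantity $(\sigma N/6)^{2/3}$ in \eqref{ge1} strongly suggests one optimizes a term of the form $x^{-2}(\rho_0 a_0^3 + 3\sigma N t)a(t)^{-1}$ or similar over $a$, exactly as $g(t^*)$ was optimized in Theorem \ref{globalk1}, with the $3(\sigma N/6)^{2/3}$ term being $3$ times the minimal value of $t \mapsto$ (a rational function of $t$ obtained after inserting a lower bound $a(t) \geq a_0(1+\text{something}\cdot t)^{1/2}$). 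I expect the cleanest route is: from $\dot H \geq -2H^2$ on $[0,T^*)$ conclude $H(t) \geq H_0/(1+2H_0 t)$, hence $a(t) \geq a_0(1+2H_0 t)^{1/2}$, then plug into \eqref{COSMO} to get
\[
\phi(t) \geq \phi_0 - \frac{3\sigma N}{a_0^3}\int_0^t \frac{ds}{(1+2H_0 s)^{3/2}} > \phi_0 - \frac{3\sigma N}{a_0^3 H_0},
\]
and separately use the constraint at the level of $H^2$ to absorb the $k_+/a^2$ term via the $\beta^2$ split. The $\beta$-factor is presumably used to guarantee the differential inequality $\dot H \geq -2H^2$ actually holds — i.e., one needs $\tfrac23\phi - k/a^2 \geq 0$, and since $a(t) \geq a_0$ this reduces to $\phi_0$ (the largest value of $\phi$... careful, $\phi$ decreases) being large enough that $\tfrac23 \phi(t) \geq k_+/a_0^2$ stays true; combined with the $\int ds/(1+2H_0 s)^{3/2}$ bound one recovers precisely the constant in \eqref{ge1}.

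The main obstacle will be reconciling two competing demands on the lower bound for $a(t)$: to control the diffusion-driven decay of $\phi$ through \eqref{COSMO} one wants $a$ large, but the constraint \eqref{CONST} only yields $\dot H \geq -2H^2$ (hence the square-root growth $a \gtrsim a_0 t^{1/2}$) once $\tfrac23\phi - k/a^2 \geq 0$, which is exactly the inequality one is trying to maintain — so the argument is genuinely a bootstrap and the constants must be chosen so the circle closes. I anticipate the parameter $\beta$ is the device that makes this work: write $H^2 \geq \tfrac13\phi + \tfrac13\rho - k_+/a^2$, demand $\tfrac13\rho - k_+/a^2 \geq -(1-\beta^2)\cdot(\text{lower bound piece})$ using $\rho \geq N/a^3$ and $a \geq a_0$, so that $H^2 \geq \beta^2 \cdot(\text{something})$ throughout, which upgrades the square-root growth of $a$ by a factor of $\beta$ and thereby sharpens the $\phi$ integral bound to produce the $1/\beta^2$ and $(\sigma N/6)^{2/3}$ constants. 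Once the bookkeeping is set up, the contradiction with maximality of $T^*$ — namely $\phi(T^*) > 0$ strictly — and the passage $T^* = T_\mathrm{max} = \infty$ together with the $t \to \infty$ limit for the final $\liminf$ bound, go through verbatim as in the earlier theorems.
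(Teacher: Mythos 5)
There is a genuine gap: your sketch never closes, and its two structural choices are the ones that fail. First, your bootstrap quantity is wrong for $k=1$. From \eqref{CONST} with $k=1$, positivity of $\phi$ does \emph{not} give $H^2\geq\frac13(\rho+\phi)-k_+/a^2>0$; the curvature term $-1/a^2$ can cancel the matter and scalar-field terms, so $H$ may reach zero while $\phi>0$. This is precisely why Theorem~\ref{condblowup} is weaker for $k=1$, and why defining $T^*$ by ``$\phi>0$'' (as in Theorem~\ref{globalk}) cannot cover all three curvatures. Second, the analytic route you propose --- $\dot H\geq-2H^2$, hence $H\geq H_0/(1+2H_0t)$ and $a\gtrsim a_0(1+2H_0t)^{1/2}$, then integrating \eqref{COSMO} --- reproduces the $H_0$-dependent bound $\phi_0-\frac{3\sigma N}{a_0^3H_0}$ of Theorem~\ref{globalk}; it cannot produce the condition \eqref{ge1}, which does not involve $H_0$ at all and has the $(\sigma N/6)^{2/3}$ structure. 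Your guesses about where $\beta$ and the exponent $2/3$ enter (``presumably'', ``I anticipate'') are not backed by any inequality that closes the circle you correctly identify as the main obstacle.

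The paper's actual mechanism is different and worth recording: set $\gamma=\frac{1}{\beta a_0}\left(\frac{\sigma N}{6}\right)^{1/3}$ and bootstrap on the \emph{exponential} lower bound $a(\tau)\geq\beta a_0e^{\gamma\tau}$, i.e.\ $T^*=\sup\{t: a(\tau)\geq\beta a_0e^{\gamma\tau}\ \text{on }[0,t]\}>0$ since $\beta<1$. On $[0,T^*)$, integrating \eqref{COSMO} against this exponential bound gives $\phi(t)\geq\phi_0-\frac{\sigma N}{\gamma\beta^3a_0^3}$, and then the constraint yields
\begin{equation*}
H(t)^2\geq\frac13\phi(t)-\frac{k_+}{a(t)^2}\geq\frac13\phi_0-\frac{\sigma N}{3\beta^3a_0^3\gamma}-\frac{k_+}{\beta^2a_0^2}\geq\gamma^2,
\end{equation*}
where the last inequality is exactly \eqref{ge1}: the choice of $\gamma$ minimizes $\gamma^2+\frac{\sigma N}{3\beta^3a_0^3\gamma}$, with minimal value $\frac{3}{\beta^2a_0^2}\left(\frac{\sigma N}{6}\right)^{2/3}$ --- this is where the exponent $2/3$ and the factor $3$ come from. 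Hence $H\geq\gamma$ and $a(t)\geq a_0e^{\gamma t}>\beta a_0e^{\gamma t}$, which closes the bootstrap ($T^*=T_{\max}$), and Lemma~\ref{nobigcrunch} with $\inf a=a_0>0$ gives $T_{\max}=\infty$; the limit bound on $\phi$ is the same integral estimate with $\frac{\sigma N}{\gamma\beta^3a_0^3}=\frac{6}{\beta^2a_0^2}\left(\frac{\sigma N}{6}\right)^{2/3}$. Note that no differential inequality for $\dot H$ is used at all; the curvature term is handled simply by $a\geq\beta a_0$, which is how $k_+/(\beta^2a_0^2)$ enters \eqref{ge1}.
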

\begin{proof}
Define
$$\gamma = \frac{1}{\beta a_0} \left (\frac{\sigma N}{6} \right )^{1/3} > 0$$ and suppose that~\eqref{ge1} holds. 
After some algebra, we see that this inequality implies
$$ \frac{1}{3}\phi_0 - \frac{\sigma N}{3\beta^3 a_0^3 \gamma} - \frac{k_+}{\beta^2 a_0^2} \geq \gamma^2.$$
Then, define
$$T^* = \sup \left \{ t \in [0,T_{\max}) : a(\tau) \geq \beta a_0 e^{\gamma \tau} \ \mathrm{for \ all} \ \tau \in [0,t] \right \}$$   
and note that since $\beta < 1$, we have $T^* > 0$.
Now, integrating~\eqref{COSMO} for $t \in [0,T^*)$, we find
\[
\phi(t)=\phi_0-3\sigma N\int_0^t\frac{ds}{a(s)^3}\geq \phi_0+\frac{\sigma N}{\gamma \beta^3 a_0^3}(e^{-3\gamma t}-1)\geq \phi_0-\frac{\sigma N}{\gamma \beta^3 a_0^3}.
\] 
Hence, \eqref{CONST} implies
$$H(t)^2 \geq \frac{1}{3} \phi(t) - \frac{k_+}{a(t)^2} \geq \frac{1}{3}\phi_0-\frac{\sigma N}{3\beta^3 a_0^3\gamma} - \frac{k_+}{\beta^2 a_0^2} \geq \gamma^2$$
so that $H(t)$ does not change sign on $[0,T^*)$ and $H(t) \geq \gamma$.  It follows that $$a(t) \geq a_0 e^{\gamma t}$$
for all $t \in [0,T^*)$.  Since $\beta < 1$, this then implies $T^* = T_{\max}$.  \ed{Finally, since $\gamma > 0$ we find
$$\inf_{t \in [0,T_{\max})} a(t) = a_0 > 0.$$ It follows by Lemma~\ref{nobigcrunch} that $T_{\max} = \infty$,} and the proof of the theorem is complete.
\end{proof}

\section{Asymptotic behavior of solutions}\label{asysec}
In this section we analyze the asymptotic behavior as $t\to\infty$ of global solutions and as $t\to T_\mathrm{max}^-$ of solutions which blow-up in finite time. We begin with the former solutions.
\begin{Theorem}
\label{asymp1}
Consider a global regular solution, i.e., $T_\mathrm{max}=\infty$, and let $\displaystyle \phi_{\infty} := \lim_{t \to \infty} \phi(t)\geq0 $. Then for $k=0,-1$ the following holds: 
\begin{itemize}
\item[(i)] If $\phi_{\infty}> 0$, then there exist $C_1,C_2> 0$ such that for all $t \geq 0$
\begin{eqnarray*}
\sqrt{\frac{\phi_{\infty} }{3}}  \leq & H(t) & \leq  \sqrt{\frac{\phi_{\infty} }{3}}  + C_2\exp \left({-\sqrt{\frac{\phi_{\infty} }{3}} t} \right) , \\  
\phi_{\infty} + C_1\sigma \exp \left({-3\sqrt{\frac{\phi_{\infty}}{3}} t} \right ) < & \phi(t) & \leq \phi_{\infty} + C_2 \sigma\exp \left({-3\sqrt{\frac{\phi_{\infty}}{3}} t} \right ), \\
a_0 \exp\left ({\sqrt{\frac{\phi_{\infty} }{3}} t} \right) \leq & a(t) & \leq C_2 \exp\left ({\sqrt{\frac{\phi_{\infty} }{3}} t} \right),\\
C_1\sigma \exp \left({-3\sqrt{\frac{\phi_{\infty}}{3}} t} \right ) \leq & \rho(t) & \leq  C_2 (1+\sigma t) \exp \left({-3\sqrt{\frac{\phi_{\infty}}{3}} t} \right ).
\end{eqnarray*}
\item[(ii)] If $\phi_{\infty}= 0$, then there exist $C_1, C_2 > 0$ such that for all $t \geq 0$
\begin{eqnarray*}
C_1 (1+t)^{-1} \leq & H(t) & \leq C_2(1+t)^{-1/2} \left [1 + \sigma (1+t)^{1/4} \right ], \\  
0 < & \phi(t) & \leq C_2(1 + t)^{-1/2}, \\
C_1(1+t)^{1/2} \leq & a(t) & \leq C_2\exp{ (t^{1/2}[1 + \sigma t^{1/4}}]),\\
C_1\exp{ (-3t^{1/2}[1 + \sigma t^{1/4}}]) \leq & \rho(t) & \leq C_2(1 +t)^{-3/2} (1+\sigma t).
\end{eqnarray*}
\end{itemize}
For the case $k=1$, if $\phi_{\infty}> \min \left \{ \frac{4}{N^2}, \frac{9}{4\rho_0 a_0^4} \right \}:=\phi_m$ then the same conclusion as in (i) holds with $\phi_\infty$ replaced by $\phi_*=\phi_\infty-\phi_m$.
\end{Theorem}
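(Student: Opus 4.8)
The plan is to establish all the bounds for $k=0,-1$ directly from four ingredients — strict monotonicity of $\phi$ (from \eqref{COSMO}), the constraint \eqref{CONST}, the characterization of $\rho$ in Lemma~\ref{charrho}, and the auxiliary Riccati-type equation \eqref{Hdot} — and then reduce the case $k=1$ to case~(i) by importing the curvature-corrected lower bounds on $H^2$ already proved in Theorem~\ref{condblowup}. Throughout, since $\dot\phi<0$ we have $\phi(t)>\phi_\infty\ge 0$ for every $t$, and since $T_\mathrm{max}=\infty$ Theorem~\ref{condblowup} gives $H(t)>0$ for $k=0,-1$ (and for $k=1$ under the stated hypothesis on $\phi_\infty$).

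\emph{Lower bounds on $H$ and $a$.} In case~(i) ($\phi_\infty>0$), equation \eqref{CONST} with $k=0,-1$ yields $H(t)^2\ge\tfrac13\phi(t)>\tfrac{\phi_\infty}{3}$, hence $H(t)\ge\sqrt{\phi_\infty/3}$ and, integrating $\dot a=Ha$, $a(t)\ge a_0e^{\sqrt{\phi_\infty/3}\,t}$. In case~(ii) ($\phi_\infty=0$) this is empty, so instead I combine \eqref{Hdot}, \eqref{CONST} with $\mathcal P\le\rho/3$ and $\phi>0$ to obtain the Riccati inequality $\dot H\ge -2H^2$; dividing by $H^2>0$ and integrating gives $H(t)\ge H_0/(1+2H_0t)\ge C_1(1+t)^{-1}$ and therefore $a(t)\ge a_0(1+2H_0t)^{1/2}\ge C_1(1+t)^{1/2}$.

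\emph{Bounds on $\phi$ and $\rho$.} Since $\phi(t)-\phi_\infty=3\sigma N\int_t^\infty a(s)^{-3}\,ds>0$, inserting the lower bounds on $a$ produces the stated upper bounds on $\phi(t)-\phi_\infty$ (exponential when $\phi_\infty>0$, of order $(1+t)^{-1/2}$ when $\phi_\infty=0$), and the matching strict lower bounds once the upper bound on $a$ from the next step is available. For $\rho$, Lemma~\ref{charrho} gives the upper bound $\rho(t)\le(\rho_0a_0^3+3\sigma Nt)a(t)^{-3}$ directly; the lower bound on $\rho$ follows from $\rho\ge N/a^3$ together with the upper bound on $a$, and the $\sigma$-weighted form in~(i) is recovered by the integrating-factor estimate $\frac{d}{dt}(a^4\rho)=a^4(\dot\rho+4H\rho)\ge 3\sigma N a$, obtained from \eqref{rhodot}, \eqref{estrhop} and \eqref{COSMO}, which gives $a(t)^4\rho(t)\ge\rho_0a_0^4+3\sigma N\int_0^t a(s)\,ds$.

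\emph{Upper bounds on $H$ and $a$, and the case $k=1$.} Substituting the bounds on $\rho,\phi$ (and, for $k=-1$, the term $1/a^2$) back into \eqref{CONST} gives $H(t)^2\le\tfrac{\phi_\infty}{3}+(\text{small})$; writing $H-\sqrt{\phi_\infty/3}=(H^2-\phi_\infty/3)/(H+\sqrt{\phi_\infty/3})$ and using $H\ge\sqrt{\phi_\infty/3}$ converts this into the asserted decay of the correction, and integrating $\dot a/a=H$ — the correction being integrable in $t$ — gives the upper bound on $a$; the $\phi_\infty=0$ subcase is the same computation with the polynomial rates from the previous steps. For $k=1$ the only missing ingredient is a lower bound on $H$: here I reuse verbatim the two estimates from the proof of Theorem~\ref{condblowup}, namely $H^2\ge\tfrac13(\phi_\infty-4N^{-2})$ (from $\rho\ge N/a^3$) and $H^2\ge\tfrac13(\phi_\infty-9(4\rho_0a_0^4)^{-1})$ (from $\rho\ge\rho_0a_0^4a^{-4}$, valid since $H>0$ throughout), and keep the better one to obtain $H(t)^2\ge\phi_*/3$; the argument of the previous steps then applies with $\sqrt{\phi_\infty/3}$ replaced by $\sqrt{\phi_*/3}$ in each exponential rate.

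The step I expect to be most delicate is the propagation of the precise correction terms in the last paragraph: one must check that the error terms fed into \eqref{CONST} really do decay at the claimed rate after taking a square root, and then that they integrate to a bounded quantity so that the exponential (resp. stretched-exponential) upper bound on $a$ survives, all while keeping the dependence on $\sigma$ explicit in each of the four chains of inequalities. Everything else is a routine combination of Gr\"onwall-type arguments with Lemma~\ref{charrho} and the constraint, with the values on $[0,T_0]$ absorbed into the constants $C_1,C_2$.
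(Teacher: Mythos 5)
Your proposal is correct and follows essentially the same route as the paper's proof: lower bounds on $H$ and $a$ from \eqref{CONST} (case (i)) or the Riccati inequality $\dot H\ge -2H^2$ (case (ii)), the bounds on $\rho$ and $\phi$ from Lemma~\ref{charrho} and integration of \eqref{COSMO}, the upper bounds on $H$ and $a$ by feeding these back into \eqref{CONST}, and the $k=1$ case via the lower bounds on $H^2$ from the proof of Theorem~\ref{condblowup}. Your small additions (the estimate $\frac{d}{dt}(a^4\rho)\ge 3\sigma N a$ to make the $\sigma$-dependence of the lower bound on $\rho$ explicit, and the identity $H-\sqrt{\phi_\infty/3}=(H^2-\phi_\infty/3)/(H+\sqrt{\phi_\infty/3})$) only make explicit what the paper leaves implicit.
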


\begin{proof}
We first assume $k=0,-1$. If $\phi_{\infty} > 0$, then, by \eqref{CONST}, 
\begin{equation}
\label{Hlower}
H(t)^2 \geq \frac{1}{3} \phi(t) \geq \frac{1}{3} \phi_{\infty}
\end{equation}
for all $t \geq 0$. Thus $H$ remains strictly positive for all time with
$H(t) \geq \sqrt{\frac{\phi_{\infty}}{3}}.$
It follows that for all $t \geq 0$
\begin{equation}
\label{alower}
a(t) \geq a_0 \exp\left ( \sqrt{\frac{\phi_{\infty}}{3}} t \right ).
\end{equation}
Furthermore, due to \eqref{rhopr} and using the positivity of $H$ in Lemma \ref{charrho} implies
$$N a(t)^{-3} \leq \rho(t) \leq (\rho_0 a_0^3 + 3\sigma N t) a(t)^{-3}$$
which yields the claimed behavior of $\rho$ once the bounds on $a(t)$ are established.
To arrive at a bound on $\phi$, we integrate \eqref{COSMO} and use the \ed{lower} bound on $a$ to find
\begin{eqnarray*}
\phi_{\infty} - \phi(t) & = & -3\sigma N \int_t^{\infty} a(s)^{-3} \ ds\\
& \geq & -\frac{3\sigma N}{a_0^3} \int_t^{\infty} e^{-\sqrt{3\phi_{\infty}}s} \ ds\\
& = & -\sqrt{\frac{3}{\phi_{\infty}}} \frac{\sigma N}{a_0^3} \exp \left (- 3 \sqrt{\frac{\phi_{\infty}}{3}} t \right )
\end{eqnarray*}
for all $t \geq 0$, which implies an upper bound on the behavior of $\phi$. \ed{Using this in \eqref{CONST} with the upper bound on $\rho$ and the lower bound on $a$ implies for all $t \geq 0$}
\ed{$$ H(t)^2 \leq  \frac{1}{3}\phi_\infty + C(1+\sigma t)\exp \left (-3\sqrt{\frac{\phi_{\infty}}{3}}t \right) + a_0^{-2} \exp\left ( -2\sqrt{\frac{\phi_{\infty}}{3}} t \right ).$$}
This further implies
$$\ed{H(t)} 
\ed{\leq \sqrt{\frac{\phi_\infty}{3}} + C\exp \left (-\sqrt{\frac{\phi_{\infty}}{3}}t \right),}$$
and, in conjunction with \eqref{Hlower}, this inequality implies the claimed behavior of $H$.
Finally, this upper bound on $H$ implies an analogous upper bound on $a$, so that
$$a(t) = a_0 \exp \left (\int_0^t H(s) \ ds \right ) \leq C \exp \left (\sqrt{\frac{\phi_\infty}{3}}  t\right).$$
Pairing this with \eqref{alower} implies the bounds on $a(t)$, and the upper bound on $a(t)$ further implies the lower bound on $\phi(t)$ using \eqref{COSMO}.


Next, assume $\phi_{\infty} = 0$. As in the previous case, we find by \eqref{CONST}
$$H(t)^2 \geq \frac{1}{3} \phi(t) > 0$$
for all $t \geq 0$, and thus $H$ remains strictly positive for all time.
As in the proof of Theorem \ref{Tge}, we use \eqref{CONST}, \eqref{Hdot}, and the bound $\mathcal{P}\leq \rho/3$ to find
$$\dot H\geq \frac{1}{3}\phi-\frac{1}{3}\rho-H^2=\frac{2}{3}\phi-\frac{k}{a^2}-2H^2\geq-2H^2$$
for $t\geq 0$.
Integrating and using the positivity of $H(t)$ we obtain
\begin{equation}
\label{Hlower2}
H(t)\geq\frac{H_0}{1+2H_0t}
\end{equation}
and the lower bound
\begin{equation}
\label{alower2}
a(t)\geq a_0(1+2H_0t)^{1/2}
\end{equation}
follows for $t \geq 0$.
Furthermore, the positivity of $H$ implies the same upper bound on $\rho$ as in the previous case, and we again find for $t \geq 0$
$$N a(t)^{-3} \leq \rho(t) \leq C(1+\sigma t) a(t)^{-3}.$$
As before, this yields the claimed asymptotic behavior of $\rho$ once the full behavior of $a$ is obtained.
Integrating \eqref{COSMO} and using \eqref{alower2}, we find
$$ \phi(t) = \phi_{\infty} + 3\sigma N \int_t^\infty a(s)^{-3} \ ds \leq \frac{3\sigma N}{H_0a_0^3} (1 + 2H_0t)^{-1/2}.$$
Next, we use \eqref{CONST}, the upper bounds on $\rho$ and $\phi$, and the lower bound on $a$ so that
\begin{eqnarray*}
H(t)^2 & \leq & \frac{\sigma N}{H_0a_0^3} (1 + 2H_0t)^{-1/2} + C(1+\sigma t) a(t)^{-3} + a_0^{-2}(1+2H_0t)^{-1}\\ 
& \leq & C(1+t)^{-1} \left [1 + \sigma (1+t)^{1/2} \right ]
\end{eqnarray*}
and thus
$$H(t) \leq C(1+t)^{-1/2} \left [1 + \sigma (1+t)^{1/4} \right ].$$
With this, an upper bound on $a(t)$ follows, namely
$$ a(t) \leq C\exp{ (t^{1/2}[1 + \sigma t^{1/4}}]).$$

The last claim for  $k=1$ follows by the same argument used to prove (i) \ed{and the lower bounds on $H$ attained for $k=1$ within the proof of Theorem~\ref{condblowup}}.  For example, assuming $\phi_\infty \geq \frac{4}{N^2}$ the argument proceeds by using
\[
H(t)^2\geq\frac13\Big(\phi_\infty-\frac{4}{N^2}\Big)+\frac{1}{3}\Big(\frac{N}{a(t)^3}+\frac{4}{N^2}\Big)-\frac{1}{a(t)^2}\geq \ed{\frac{\phi_*}{3}}
\]
instead of~\eqref{Hlower}.
\ed{Similarly, assuming $\phi_\infty \geq \frac{9}{4\rho_0a_0^4}$ the argument proceeds by using
\[
H(t)^2\geq\frac13\phi(t)-\frac{3}{4\rho_0a_0^4} \geq \frac{\phi_*}{3}
\]
instead of~\eqref{Hlower}.}
\end{proof} 

We remark that sufficient conditions to have $\phi_\infty>0$ for $k=0,-1$ and $\phi_*>0$ for $k=1$ follow by the lower bounds on $\lim_{t\to\infty}\phi(t)$ derived in Theorems~\ref{globalk} and~\ref{globalgeneral}.
Next, we provide some information on the behavior of solutions that blow-up in finite time as they approach the singularity.
 \begin{Theorem}
\label{asymp2}
Let $k=0,-1$ and consider a regular solution that blows up in finite time, i.e., $T_\mathrm{max} <\infty$. Then, we have
\begin{equation}\label{limits}
\lim_{t \to T_{\max}^-} H(t) = \lim_{t \to T_{\max}^-} \phi(t) = - \infty, \quad \lim_{t \to T_{\max}^-} \rho(t) = \infty, \quad \ \text{and} \ \lim_{t \to T_{\max}^-} a(t) = 0.
\end{equation}
Moreover, the singularity is a curvature singularity, and there exist \ed{$C_1, C_2 > 0$ such that for $t$ sufficiently close to $T_{\max}$
\begin{equation}\label{claimedbounds}
\left. \begin{gathered}
-\frac{1}{T_{\max} - t} \leq H(t) \leq -\frac{C_2}{\sqrt{T_{\max} - t}},\\
-\frac{C_1}{(T_{\max} - t)^2} \leq \phi(t) \leq -\frac{C_2}{\sqrt{T_{\max} - t}},\\ 
C_1(T_{\max} - t) \leq a(t) \leq C_2 \sqrt{T_{\max} - t},\\
\frac{C_1}{(T_{\max} - t)^{3/2}} \leq \rho(t) \leq \frac{C_2}{(T_{\max} - t)^2}. 
\end{gathered}
\quad \right\}
\end{equation}
}For the case $k=1$, if $\displaystyle \lim_{t \to T_{\max}^-} \phi(t) < 0$ then the solution blows up in finite time and the same asymptotic behavior follows with identical bounds.
\end{Theorem}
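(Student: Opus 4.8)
I would work on a left-neighborhood $[T_0,T_\mathrm{max})$ of the singular time on which $\phi<0$: for $k=0,-1$ this follows from Theorem~\ref{condblowup}, since $T_\mathrm{max}<\infty$ is $\sim$(c), hence $\sim$(a), i.e.\ $\lim_{t\to T_\mathrm{max}^-}\phi<0$ (the limit exists because $\phi$ is strictly decreasing by \eqref{COSMO}); for $k=1$ it is the hypothesis. On such an interval $\rho+3\mathcal P\ge0$ and $\phi<0$, so \eqref{Hdot} gives $\dot H\le -H^2$. First I would show $H$ must become negative somewhere: if $H\ge0$ on all of $[0,T_\mathrm{max})$ then $a$ is nondecreasing, so $\inf a>0$, contradicting Lemma~\ref{nobigcrunch}. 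Picking $T_1\ge T_0$ with $H(T_1)<0$, the comparison argument for $\dot H\le-H^2$ gives $H(t)\le (t-t^\ast)^{-1}$ with $t^\ast=T_1-1/H(T_1)$; since $H$ is finite on $[0,T_\mathrm{max})$ and $H(t)\le(t-t^\ast)^{-1}\to-\infty$ as $t\uparrow t^\ast$, this forces $T_\mathrm{max}\le t^\ast$. Moreover $H$ is nonincreasing and bounded above by $H(T_1)<0$ on $[T_1,T_\mathrm{max})$, so it has a limit; that limit must be $-\infty$, for otherwise $H$ would be bounded, keeping $a$ bounded away from $0$ on the finite interval $[T_1,T_\mathrm{max})$ and contradicting Lemma~\ref{nobigcrunch}. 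Hence $\lim_{t\to T_\mathrm{max}^-}H=-\infty$; then $\dot a=Ha<0$ near $T_\mathrm{max}$, so $a$ decreases to a limit which must be $0$ (Lemma~\ref{nobigcrunch} again), and $\rho\ge N/a^3\to\infty$. Applying Lemma~\ref{Lemmablowup}(b) with $I=H$, $t_1=T_1$, $t_2=T_\mathrm{max}$ yields $H(t)\ge-1/(T_\mathrm{max}-t)$; integrating $\dot a/a=H$ gives $a(t)\ge C_1(T_\mathrm{max}-t)$, and then integrating \eqref{COSMO} gives $\phi(t)\ge -C_1(T_\mathrm{max}-t)^{-2}$, while trivially $H(t)^2\le (T_\mathrm{max}-t)^{-2}$.

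\textbf{The crux: the upper bound on $a$.} The key, and the step I expect to be the main obstacle, is $a(t)\le C_2\sqrt{T_\mathrm{max}-t}$; the difficulty is that the constraint \eqref{CONST} together with $\rho\ge N/a^3$ only ever yields \emph{lower} bounds on $a$, so one needs a genuine a priori \emph{upper} bound on $\rho$. I would extract it from Proposition~\ref{estF}(ii) with $\gamma=1$: since $H\to-\infty$, $H$ is bounded above on $[0,T_\mathrm{max})$, so $\alpha t+\int_0^t(H)_+\,ds$ is bounded on $[0,T_\mathrm{max})$, hence $\int_{\R^3}\sqrt{a^2+|v|^2}\,F\,dv\le C$ and therefore $\rho\le C a^{-4}$. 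Feeding this into \eqref{CONST} and using $\phi<0$ (and that $-k/a^2\le 0$ for $k=0,1$, while for $k=-1$ the term $1/a^2$ is absorbed once $a\le1$) gives $H^2\le C' a^{-4}$, i.e.\ $|H|\le\sqrt{C'}\,a^{-2}$. Consequently $\frac{d}{dt}(a^2)=2a^2H\ge-2\sqrt{C'}$, and integrating from $t$ up to $T_\mathrm{max}$ (where $a\to0$) gives $a(t)^2\le 2\sqrt{C'}(T_\mathrm{max}-t)$.

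\textbf{The remaining upper bounds.} Everything else follows from the two-sided control of $a$. From $a\le C_2\sqrt{T_\mathrm{max}-t}$ we get $\rho\ge N/a^3\ge C(T_\mathrm{max}-t)^{-3/2}$, and integrating \eqref{COSMO} gives $\phi(t)\le -C(T_\mathrm{max}-t)^{-1/2}$, hence $\phi\to-\infty$. For the matching lower bound on $|H|$ I would use \eqref{FRIED}: for $k=0,-1$ (and for $k=1$ once $a$ is small) $\dot H\le-\tfrac12\rho\le-\tfrac N2 a^{-3}$; viewing $H$ as a function of the monotone variable $a$, $\frac{d}{da}\big(\tfrac12H^2\big)=\dot H/a\le-\tfrac N2 a^{-4}$, which integrates (from a fixed $a$-value down to small $a$) to $H^2\ge\tfrac N3 a^{-3}-C\ge\tfrac N6 a^{-3}$ for $a$ small, whence $H(t)\le-c\,a^{-3/2}\le-C(T_\mathrm{max}-t)^{-3/4}\le-C(T_\mathrm{max}-t)^{-1/2}$. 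Finally \eqref{CONST}, together with $H^2\le(T_\mathrm{max}-t)^{-2}$ and $\phi\ge-C_1(T_\mathrm{max}-t)^{-2}$, gives $\rho=3H^2+3k/a^2-\phi\le C(T_\mathrm{max}-t)^{-2}$ (for $k=1$ the extra $3/a^2$ is also $\le C(T_\mathrm{max}-t)^{-2}$ by the lower bound on $a$). This establishes all of \eqref{claimedbounds}.

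\textbf{Curvature singularity and the case $k=1$.} For the curvature singularity I would compute the invariant $R_{\mu\nu}R^{\mu\nu}$. Since the time coordinate is proper time, $R_{00}=-3\ddot a/a=\tfrac12(\rho+3\mathcal P)-\phi$ by \eqref{Hdot}, and both $\rho+3\mathcal P\ (\ge\rho)$ and $-\phi$ tend to $+\infty$; as $R_{\mu\nu}R^{\mu\nu}$ is a sum of squares of the diagonal entries $(g^{\mu\mu})^2R_{\mu\mu}^2$ and $(g^{00})^2=1$, we get $R_{\mu\nu}R^{\mu\nu}\ge R_{00}^2\to\infty$, so the singularity is a curvature singularity. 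For $k=1$, once $T_\mathrm{max}<\infty$ and $\lim_{t\to T_\mathrm{max}^-}\phi<0$ are assumed (the former being, by Theorem~\ref{condblowup}, necessary for $\lim\phi<0$), the argument above applies verbatim; the only changes are the subdominant terms $\pm1/a^2$ in \eqref{FRIED} and \eqref{CONST}, which are dominated by $\rho\gtrsim a^{-4}$ (or bounded by $(T_\mathrm{max}-t)^{-2}$ via $a\ge C_1(T_\mathrm{max}-t)$) for $a$ small.
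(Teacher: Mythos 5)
Your argument is correct, and its skeleton matches the paper's: get $\phi<0$ and $H<0$ near $T_\mathrm{max}$ from Theorem~\ref{condblowup} and Lemma~\ref{nobigcrunch}, deduce the limits in \eqref{limits}, use $\dot H\le -H^2$ with Lemma~\ref{Lemmablowup}(b) to get $H\ge -1/(T_\mathrm{max}-t)$, hence the lower bounds on $a$ and $\phi$, and recognize that the whole game is the upper bound $a\le C\sqrt{T_\mathrm{max}-t}$, which both you and the paper extract from an estimate $\rho\le Ca^{-4}$ fed into \eqref{CONST}. Where you genuinely differ is in how three estimates are produced. (i) You obtain $\rho\le Ca^{-4}$ from the kinetic moment bound, Proposition~\ref{estF}(ii) with $\gamma=1$, using that $H$ is bounded above on $[0,T_\mathrm{max})$ once $H\to-\infty$ is known; the paper instead stays at the macroscopic level, writing $\dot\rho\le -4H\rho+3\sigma Na^{-4}$ from \eqref{rhodot} and \eqref{estrhop} and integrating $\frac{d}{dt}(\rho a^4)\le 3\sigma N$. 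Both are valid; the paper's route needs only the fluid equations, while yours reuses machinery already proved (in the same spirit as the proof of Lemma~\ref{nobigcrunch}). (ii) For the upper bound on $H$, the paper integrates $\dot H\le -\tfrac16\rho\le -C(T_\mathrm{max}-t)^{-3/2}$ from \eqref{Hdot}; you instead use \eqref{FRIED} and the monotone variable $a$ to get $H^2\ge \tfrac{N}{6}a^{-3}$ for small $a$, which yields the stronger rate $H\le -C(T_\mathrm{max}-t)^{-3/4}$ and a fortiori the claimed $-C(T_\mathrm{max}-t)^{-1/2}$ (for $k=1$ the constant $\tfrac12$ in $\dot H\le-\tfrac12\rho$ should be relaxed, e.g.\ to $\tfrac14$ once $a\le N/4$, as you indicate). (iii) For the curvature singularity the paper notes $R=4\phi-(\rho+3\mathcal P)\to-\infty$, while you show $R_{\mu\nu}R^{\mu\nu}\ge R_{00}^2=\bigl(\tfrac12(\rho+3\mathcal P)-\phi\bigr)^2\to\infty$, which is an equally good (and sign-convention-proof) invariant. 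One small point to tighten: your contradiction showing $H$ must turn negative is run on $[0,T_\mathrm{max})$, but you then pick $T_1\ge T_0$ with $H(T_1)<0$; to justify this, apply the same Lemma~\ref{nobigcrunch} argument on $[T_0,T_\mathrm{max})$ (if $H\ge0$ there, $a$ is nondecreasing past $T_0$ and $\inf a>0$), which is exactly the fix the paper's ``without loss of generality'' glosses over as well.
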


\begin{proof}
\ed{By Theorem~\ref{condblowup} the finite time blow-up of the solution in the cases $k=0, -1$ implies $H(t) \leq 0$ for some $t \in (0, T_{\max})$ and $\phi(t)$ must eventually attain negative values.  Without loss of generality, we can take $T_1 \in (0,T_{\max})$ large enough so that $\phi(t), H(t) < 0$ for $t \in [T_1, T_{\max})$, as $\phi$ is decreasing and $\phi(t) < 0$ implies $\dot{H}(t) < 0$ by \eqref{Hdot}.
With this, we see that $a(t)$ is strictly decreasing on $(T_1, T_{\max})$, and by Lemma~\ref{nobigcrunch} we find both $a(t) > 0$ for all $t \in [0,T_{\max})$ and $$\inf_{t \in [0,T_{\max})} a(t) = 0.$$ Thus, it follows that $\lim_{t \to T_{\max}^-} a(t) = 0.$ The limits of $H$ and $\rho$ in \eqref{limits} follow from this behavior. Indeed, the limiting behavior of $\rho$ follows directly from the lower bound \eqref{estrhop}. Also, because $\phi(t) < 0$ for $t \in [T_1,T_{\max})$, 
we use \eqref{Hdot} to arrive at
\begin{equation}
\label{Hdotupper}
\dot{H}(t) < - H(t)^2
\end{equation}
for $t \in [T_1, T_{\max})$. 
Then, using the definition of $H$, we compute
$$\dot{H}(t) = \frac{d}{dt} \left (\frac{\dot{a}(t)}{a(t)} \right ) = \frac{\ddot{a}(t)}{a(t)} - H(t)^2$$
so that
$$\ddot{a}(t) = \left (\dot{H}(t) + H(t)^2 \right )a(t) < 0$$
for $t \in [T_1,T_{\max})$. Therefore, $ \dot{a}(t) < \dot{a}(T_1) < 0$ on  $[T_1, T_{\max})$, and 
$$H(t) = \frac{\dot{a}(t)}{a(t)} < \frac{\dot{a}(T_1)}{a(t)}$$ 
for $t \in [T_1, T_{\max})$ then implies $H(t) \to -\infty$ as $t \to T_{\max}^-$. 
The limiting behavior of $\phi$ will be obtained from the upper bound established below.
}

\ed{Next, we turn to establishing the precise asymptotic behavior of these functions at the blow-up time. 
Using \eqref{limits} we find $\lim_{t \to T_{\max}^-} H(t)^{-1} = 0$, and because $T_{\max}$ is the blow-up time, it follows that $T_{\max} \leq T_1 - \frac{1}{H(T_1)}$.
Therefore, using \eqref{Hdotupper} and evoking Lemma \ref{Lemmablowup} with $t_1 = T_1$ and $t_2 = T_{\max}$, we find
$$H(t) \geq - \frac{1}{T_{\max} - t}$$
for $t \in [T_1, T_{\max})$.
}

\ed{With this estimate on $H$ we also find for $\tau \in [T_1,T_{\max})$
$$\dot{a}(\tau) \geq -\frac{1}{T_{\max} - \tau} a(\tau),$$
and upon dividing by $a(\tau) > 0$ and integrating over $[T_1,t]$, the lower bound
$$a(t) \geq \frac{a(T_1)}{T_{\max} - T_1} \left (T_{\max} - t \right ) \geq C\left (T_{\max} - t \right )$$
follows for $t \in [T_1, T_{\max})$.}

\ed{The lower bound on $\phi(t)$ is obtained by using this estimate within~\eqref{COSMO} so that
$$\phi(t) \geq \phi(T_1) - C\sigma \int_{T_1}^t \frac{ds}{(T_{\max} - s)^3} = \phi(T_1) + \frac{C \sigma}{(T_{\max} -T_1)^2} - \frac{C\sigma}{(T_{\max} - t)^2}
\geq - \frac{C}{(T_{\max} - t)^2}$$
for $t$ sufficiently close to $T_{\max}$.
Similarly, to derive the upper bound on $\rho$ we first use \eqref{CONST} to find
$$ \rho = 3 \left ( H^2 + \frac{k}{a^2} \right ) - \phi \leq 3H^2 + \frac{3}{a^2} - \phi.$$ 
Then, the previously-obtained lower bound on $H(t)$ for $t \in [T_1, T_{\max})$ implies $\vert H(t) \vert \leq \frac{1}{T_{\max}- t}$, and thus
\begin{equation}
\label{H2}
H(t)^2 \leq \frac{1}{(T_{\max} - t)^2}
\end{equation}
on the same time interval. Using this with the lower bounds on $a$ and $\phi$, we have 
$$\rho(t) \leq \frac{C}{(T_{\max} - t)^2}$$
for $t$ sufficiently close to $T_{\max}$.
}

\ed{Next, we establish bounds on these functions in the opposite directions. Since we know $a \to 0$ as $t \to T_{\max}^-$ and $H(T_1), \phi(T_1) < 0$ with each of these functions decreasing on  $(T_1, T_{\max})$, we can find $T_2 \in [T_1, T_{\max})$ such that $H(t), \phi(t) < 0$ and $a(t) < 1$ for $t \in [T_2, T_{\max})$. Then, using \eqref{rhodot}, \eqref{estrhop}, and \eqref{COSMO} we find
$$\dot{\rho}(t) \leq -4H(t)\rho(t) + 3\sigma N a(t)^{-3} \leq -4H(t) \rho(t) + 3\sigma N a(t)^{-4}$$
for $t \in [T_2, T_{\max})$.
Multiplying by $a(t)^4$, we can rewrite the inequality as
$$\frac{d}{dt} \left ( \rho(t) a(t)^4 \right ) \leq 3\sigma N$$
and integrating yields
$$\rho(t) \leq a(t)^{-4} \left ( \rho(T_2) a(T_2)^4 + 3\sigma N(T_{\max} - T_2) \right ) \leq Ca(t)^{-4}$$
for $t \in [T_2, T_{\max})$.
Using this estimate within \eqref{CONST}, we find
$$H(t)^2 \leq \frac{1}{3} \rho + a(t)^{-2} \leq C \left (a(t)^{-4} + a(t)^{-2} \right ) \leq C a(t)^{-4}$$
for $t \in [T_2, T_{\max})$.
Multiplying by $a(t)^4$, this becomes
$$\vert a(t) \dot{a}(t) \vert^2 \leq C$$
which, because $\dot{a}(t) < 0$, further implies
$$ -\frac{1}{2} \frac{d}{dt} \left ( a(t)^2 \right) \leq C.$$
Integrating over $[\tau, T_{\max})$ and using the limiting behavior of $a$ as $t \to T_{\max}^-$, we find
$$a(\tau) \leq C \sqrt{T_{\max} - \tau}$$
for $\tau \in [T_2, T_{\max})$.}

\ed{With the upper bound on $a$ established, the lower bound on $\rho$ follows directly from \eqref{estrhop} so that
$$ \rho(t) \geq N a(t)^{-3} \geq \frac{C}{(T_{\max} -t )^{3/2}}$$
for $t \in [T_2, T_{\max})$.
Additionally, the upper bound on $\phi$ follows exactly as before. In particular, integrating~\eqref{COSMO} over $[T_2, t]$ we find
$$\phi(t) \leq \phi(T_2) - C \int_{T_2}^t \frac{ds}{(T_{\max} - s)^{3/2}} \leq - \frac{C}{\sqrt{T_{\max} - t}}$$
for $t$ sufficiently close to $T_{\max}$.
Furthermore, this estimate implies the limiting behavior of $\phi$ within \eqref{limits}.
Finally, the upper bound on $H$ is obtained from \eqref{Hdot} so that
$$ \dot{H}(t) \leq - \frac{1}{6}\rho \leq -\frac{C}{(T_{\max} - t)^{3/2}}$$
for $t \in [T_2, T_{\max})$.
Integrating over $[T_2, \tau]$ then yields the result 
$$H(\tau) \leq -\frac{C}{\sqrt{T_{\max} - \tau}}$$
for $\tau$ sufficiently close to $T_{\max}$. 
}

For the case $k = 1$ we merely repeat this argument under the assumption that $\phi(t) < 0$ for some $t \in (0, T_{\max})$ since this gives rise to negative values of $H(t)$ as in the proof of Theorem~\ref{condblowup}, as well as, the upper bound $\dot{H}(t) \leq - H(t)^2$.

Finally the singularity at $t\to T_\mathrm{max}^-$ is a curvature singularity, for the Einstein equation~\eqref{einsteineq} and the proven bounds imply 
\[
R=4\phi-(\rho+3\mathcal{P})\to -\infty.
\]
\end{proof}

With these results at hand we can finally discuss the important question of the existence of a phase of accelerated expansion of the Universe in the future of a phase of decelerated expansion. There is overwhelming experimental evidence that the Universe is currently expanding with acceleration. Nonetheless, the standard cosmological models require the existence in the past of a phase of decelerated expansion, during which the structures visible today (galaxies, clusters, etc.) have formed. The following last result shows that the diffusion model studied in this paper is able to reproduce this physical behavior.
 
Let us define the dimensionless constants:
\[
\Sigma_0=\frac{\sigma}{H_0},\quad \Phi_0=\frac{\phi_0}{\rho_0}.
\]
\begin{Corollary}
Let $k=0$ or $-1$. Then for $3\Sigma_0<\Phi_0<\frac{1}{2}$ the regular solution is global  and there \ed{exist} $0<t_0<t_1$ such that $q(t)>0$ for $t\in[0,t_0)$, and $q(t)<0$ for $t>t_1$. 
\end{Corollary}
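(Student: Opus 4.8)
The plan is to combine the global existence criterion of Theorem~\ref{globalk} with the long-time asymptotics of Theorem~\ref{asymp1} and a continuity argument at $t=0$, throughout using the identity $q(t)=\rho(t)+3\mathcal{P}(t)-2\phi(t)$ from~\eqref{Q} together with the estimates~\eqref{estrhop}. The sign of $q$ is what distinguishes decelerated ($q>0$) from accelerated ($q<0$) expansion once one knows $H>0$, so the argument splits into (a) establishing $H(t)>0$ for all $t$, (b) showing $q(0)>0$, and (c) showing $q(t)<0$ for large $t$.

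First I would unpack the hypothesis. Since $\rho_0\geq Na_0^{-3}$ by~\eqref{estrhop}, the assumption $3\Sigma_0<\Phi_0$, i.e. $3\sigma\rho_0<H_0\phi_0$, gives $\phi_0>3\sigma\rho_0/H_0\geq 3\sigma N/(H_0a_0^3)$; hence Theorem~\ref{globalk} applies and yields $T_\mathrm{max}=\infty$ together with $\phi_\infty:=\lim_{t\to\infty}\phi(t)\geq\phi_0-3\sigma N/(H_0a_0^3)>0$. Because $\dot\phi<0$ by~\eqref{COSMO}, we get $\phi(t)\geq\phi_\infty>0$ for all $t$, and then~\eqref{CONST} with $k\leq0$ forces $H(t)^2\geq\tfrac13\phi(t)\geq\tfrac13\phi_\infty>0$; since $H_0>0$ and $H$ is continuous and never vanishes, $H(t)>0$ for all $t\geq0$. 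This settles global existence and reduces the remaining task to tracking the sign of $q$.

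For the decelerated phase near $t=0$, I would bound $q(0)$ from below using only the prescribed data: from $q(0)=\rho_0+3\mathcal{P}_0-2\phi_0$, the lower pressure bound $\mathcal{P}_0\geq\tfrac13\rho_0-\tfrac13Na_0^{-3}$ in~\eqref{estrhop}, and $Na_0^{-3}\leq\rho_0$, one obtains $q(0)\geq 2\rho_0-Na_0^{-3}-2\phi_0\geq\rho_0-2\phi_0=\rho_0(1-2\Phi_0)>0$, the last inequality being exactly where $\Phi_0<\tfrac12$ enters. Since $\rho,\mathcal{P},\phi$ are continuous on $[0,\infty)$ for a regular solution, $q$ is continuous, so $q(0)>0$ produces some $t_0>0$ with $q(t)>0$ on $[0,t_0)$; paired with $H>0$ this is the decelerated expanding phase. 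For the accelerated phase I would invoke Theorem~\ref{asymp1}(i): since $\phi_\infty>0$ it gives $\rho(t)\to0$, hence also $\mathcal{P}(t)\to0$ because $0\leq\mathcal{P}\leq\rho/3$, while $\phi(t)\to\phi_\infty$; therefore $q(t)=\rho(t)+3\mathcal{P}(t)-2\phi(t)\to-2\phi_\infty<0$, so there is $t_1$ with $q(t)<0$ for all $t>t_1$, and after enlarging $t_1$ we may assume $t_1>t_0$. Combined with $H>0$ this is the accelerated expanding phase, completing the proof.

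The genuinely delicate point is the quantitative step yielding $q(0)>0$: one must verify that the pressure and number-density bounds in~\eqref{estrhop} are tight enough that $\Phi_0<\tfrac12$ alone implies $q(0)>0$ uniformly over all admissible $F_0$ (the worst case being $\mathcal{P}_0$ as small as~\eqref{estrhop} allows and $Na_0^{-3}$ as large as~\eqref{estrhop} allows). It also remains to check, as the paper remarks is routine, that the window $3\Sigma_0<\Phi_0<\tfrac12$ is consistent with the constraint~\eqref{constin} and hence nonempty; everything else reduces to the already-established Theorems~\ref{globalk} and~\ref{asymp1} plus elementary continuity and monotonicity.
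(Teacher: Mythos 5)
Your proposal is correct and follows essentially the same route as the paper: Theorem~\ref{globalk} (via $\rho_0\geq Na_0^{-3}$) gives global existence and $\phi_\infty>0$, Theorem~\ref{asymp1} gives $q(t)\to-2\phi_\infty<0$, and $\Phi_0<\tfrac12$ gives $q(0)\geq\rho_0(1-2\Phi_0)>0$ by continuity. The only (harmless) difference is that your bound on $q(0)$ detours through the lower pressure estimate in~\eqref{estrhop}, whereas the paper simply drops the term $3\mathcal{P}_0\geq0$; the step you flag as delicate is in fact immediate.
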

\begin{proof}
As $\rho_0\geq N/a_0^3$, then $\Phi_0>3\Sigma_0$ implies $\phi_0>3\frac{\sigma N}{H_0 a_0^3}$, hence the solution is global by Theorem~\ref{globalk}. Moreover
\[
\lim_{t\to\infty}q(t)=\lim_{t\to\infty} \left [ \rho(t)+3\mathcal{P}(t)-2\phi(t) \right ] =-2\phi_\infty<0,
\]
and therefore there exists $t_1>0$ such that $q(t)$ is negative for $t>t_1$. Furthermore, we find
\[
q(0)=\rho_0+3\mathcal{P}_0-2\phi_0>\rho_0-2\phi_0=\rho_0(1-2\Phi_0)>0.
\]
Hence, $\Phi_0<1/2$ implies $q(0)>0$, which further guarantees the existence of $t_0>0$ such that $q(t)>0$ for $t\in [0,t_0)$.

\end{proof}

\end{document}